\numberwithin{equation}{section}
\theoremstyle{plain}
\newtheorem{theorem}{Theorem}[section]
\newtheorem{lemma}[theorem]{Lemma}
\theoremstyle{definition}
\newtheorem{definition}[theorem]{Definition}
\newtheorem{remark}[theorem]{Remark}
\theoremstyle{remark}
\newcommand{\R}{\mathbb{R}}
\newcommand{\C}{\mathbb{C}}
\newcommand{\Hom}{\operatorname{Hom}}
\newcommand{\Norm}{\operatorname{Norm}}
\newcommand{\Aut}{\operatorname{Aut}}
\newcommand{\Cent}{\operatorname{Cent}}
\newcommand{\der}{\operatorname{der}}
\newcommand{\tr}{\operatorname{tr}}
\newcommand{\Gal}{\operatorname{Gal}}
\newcommand{\reg}{\operatorname{reg}}
\newcommand{\disc}{\operatorname{disc}}
\newcommand{\temp}{\operatorname{temp}}
\newcommand{\elll}{\operatorname{ell}}
\newcommand{\dimm}{\operatorname{dim}}
\newcommand{\Out}{\operatorname{Out}}
\begin{document}

\title[The spectral side of stable local trace formula]{The spectral side of stable local trace formula for real groups}
\author[]{}
\dedicatory{}

\address{}
\email{}

\thanks{}

\date{\today}
\author[Chung Pang Mok, Zhifeng Peng*]{Chung Pang Mok, Zhifeng Peng*}

\address{Institute of Mathematics, Academia Sinica, Taipei}
\email{mokc@gate.sinica.edu.tw}

\address{Department of Mathematics, National University of Singapore}

\email{matpeng@nus.edu.sg}

\maketitle
\begin{abstract}
Let $G$ be a connected quasi-split reductive group over $\R$, and more generally, a quasi-split $K$-group over $\R$. Arthur had obtained the formal formula for the spectral side of the stable local trace formula, by using formal substitute of Langlands parameters. In this paper, we construct the spectral side of the stable trace formula and endoscopy trace formula directly for quasi-split $K$-groups over $\R$, by incorporating the works of Shelstad. In particular we give the explicit expression for the spectral side of the stable local trace formula, in terms of Langlands parameters. 
\end{abstract}

\maketitle


\section{Introduction}
\label{sect: introduction}
    
In this paper, which is a sequel to \cite{P}, we give the explicit formula for spectral side of stable local trace formula of a connected quasi-split reductive group over $\R$, and more generally, a quasi-split $K$-group over $\R$.

\bigskip

In general, the local trace formula is an identity, one side which is called the geometric side, is constructed by the semisimple orbital integrals; the other side, which is called the spectral side, is constructed in terms of tempered characters. Arthur \cite{A6} has obtained the stabilization of the geometric side, and consequently obtained the formal formula for the spectral side of the stable trace formula. However, the stable distributions and the coefficients that occurred in the formal formula for the spectral side, are not explicit. 

\bigskip

By combining with Shelstad's works \cite{S1,S2,S3}, we will directly stabilize the spectral side of the local trace formula, which in particular give the explicit formula for the spectral side of the stable local trace formula, in terms of Langlands parameters.\\

In more details, let $G$ be a quasi-split $K$-group over $\R$, a notion for which we refer to Section 1 of \cite{A4} (where it is called {\it multiple groups}) or section 2.2 of \cite{P}, and $f$ a test function on $G(\R)$ with central character $\zeta$. The endoscopic decomposition of the spectral side of the invariant local trace formula, as obtained in \cite{A6}, takes the following form:
\[
  I^G_{\disc}(f)=\sum_{G^{\prime}\in\mathcal{E}_{\elll}(G)}\iota(G,G^{\prime})\widehat{S}^{G^{\prime}}_{\disc}(f^{G^{\prime}})
  \]
where $\mathcal{E}_{\elll}(G)$ is the set of $\widehat{G}$-equivalence classes of elliptic endoscopic data, and $f^{G^{\prime}}$ is the Langlands-Shelstad transfer of $f$ to $G^{\prime}$ \cite{S1}. One has the formal formula:
   $$\widehat{S}^{G^{\prime}}_{\disc}(f^{G^{\prime}})=\int_{\Phi_{s-\disc}(G^{\prime},\zeta)}s^{G^{\prime}}(\phi^{\prime})f^{G^{\prime}}(\phi^{\prime})d\phi^{\prime}$$
with $\Phi_{s-\disc}(G^{\prime},\zeta)$ being defined only in terms of formal substitute of Langlands parameters $\phi^{\prime}$ of $G^{\prime}$, and the coefficients $s^{G^{\prime}}(\phi^{\prime})$ are not explicit. Our task is to show that $\Phi_{s-\disc}(G^{\prime},\zeta)$ can be taken as actual Langlands parameters, and also to give explicit formula for the coefficients in terms of Langlands parameters; {\it c.f.} Section 4 for the definition of these terms. \\

In the global context of automorphic representations, the method of \cite{A1, A9} is based on the comparison of the spectral and endoscopic objects for the global trace formula for $G$. This could be expressed schematically in terms of (conjectural) Langlands parameters:
     $$(M,\phi_{M})\longrightarrow(\phi,s)\longleftarrow (G^{\prime},\phi^{\prime})$$

\noindent In the archimedean local setting, the Langlands parametrization is available for general $G$. This allows us to adapt the comparison process to the setting of local trace formula:

     \[(\tau)\longrightarrow(\phi,s)\longleftarrow (G^{\prime},\phi^{\prime})\]
    in order to give the explicit construction of the spectral side of stable local trace formula.
     
\bigskip

We now give more details for the comparison process. Firstly recall the classification theory of tempered representations, due to Harish-Chandra. The tempered representations can be classified by triplets $\tau=(M,\pi,r)$, where $M$ is the Levi subgroup, $\pi\in \Pi_{2}(M)$ is square integrable modulo the split centre of $M$, and $r\in R_{\pi}$, the representation theoretic $R$-group of $\pi$, which is a finite abelian elementary $2$-group. For a test function $f= f_1 \times \bar{f}_2$, with $f_1,f_2 \in \mathcal{H}(G(\R),\zeta)$, the Hecke space with central character $\zeta$, the spectral side of the invariant local trace formula for $G$ takes the following form \cite{A2}:
    \[I^G_{\disc}(f)=\int_{T_{\disc}(G,\zeta)}i^{G}(\tau) f_{G}(\tau)|R_{\pi}|^{-1}d{\tau}.\]

\noindent where $f_{G}(\tau)= \Theta(\tau,f_1) \Theta(\tau^{\vee},\bar{f}_2)$, and the coefficient
    \[ i^{G}(\tau)=|W^{\circ}_{\pi}|^{-1}\sum_{w \in W_{\pi}(r)_{\reg}}\varepsilon_{\pi}(w)|\det(w-1)_{\mathfrak{a}^{G}_{M}}|^{-1},\]
encodes combinatorial data from Weyl groups that is relevant to the comparison of global trace formulas \cite{A1,A9}; {\it c.f.} Section 3 for the definition of these terms.\\

 The first step in the process of stabilization is to express the coefficient $i^G(\tau)$ in terms of  data defined in terms of Langlands parameters for $G$, and more precisely, defined in terms of the data $(\phi,s)$ above, an important point being that, by the works of Knapp-Zuckerman \cite{KZ} and Shelstad \cite{S1}, the representation theoretic $R$-group is canonically isomorphic to the endoscopic $R$-group defined in terms of Langlands parameters; {\it c.f.} Section 4. 
\[i^{G}(\tau)=|W^{\circ}_{\phi}|^{-1}\sum_{w\in W_{\phi}(x)_{\reg}}s^{\circ}_{\phi}(w)|\det(w-1)_{\mathfrak{a}^{G}_{M}}|^{-1}.\]
where $x$ is the image of $s$ in $\mathcal{S}_{\phi} = \pi_0(\overline{S}_{\phi})$. In turn, this can be expressed in terms of the constants $\sigma(\overline{S}_{\phi,s}^{\circ})$ as defined in \cite{A1}:
 \[ i^G(\tau) = i_{\phi}(x)= \sum_{s\in \mathcal{E}_{\phi,\elll}(x)}|\pi_{0}(\overline{S}_{\phi,s})|^{-1}\sigma(\overline{S}_{\phi,s}^{\circ}).\]

Following \cite{A1}, one defines the following subsets $\Phi_{s-\disc}(G,\zeta)\subset \Phi_{\disc}(G,\zeta)$ of the set $\Phi(G,\zeta)$ of Langlands parameters for $G$ (with central character $\zeta$), as:
    \[\Phi_{s-\disc}(G,\zeta)=\{\phi\in\Phi(G,\zeta):Z(\overline{S}_{\phi}^{\circ})<\infty\},\] 
 \[\Phi_{\disc}(G,\zeta)=\{\phi\in\Phi(G,\zeta):Z(\overline{S}_{\phi})<\infty\}.\] 
One has the fact that the constants $\sigma(\overline{S}_{\phi,s}^{\circ})$ vanish if $Z(\overline{S}_{\phi,s}^{\circ})$ is not finite; in particular that the constants $\sigma(\overline{S}_{\phi,s}^{\circ})$ vanish if $\phi \notin \Phi_{s-\disc}(G,\zeta)$.

\bigskip

Having defined the basic endoscopic and stable objects on the spectral side, we can define the spectral transfer factors, in the spirit of Section 5 of \cite{A3}, by combining the classification of tempered representations and Shelstad's definition of spectral transfer factors \cite{S2,S3}. For instance, in the case where $\phi \in \Phi(G,\zeta)$ is elliptic, we define, for $\tau =(M,\pi,r)$ and $s \in \overline{S}_{\phi}$ ({\it c.f.} Section 5):
        \[\Delta(\tau,\phi^{s})=\sum_{\chi\in\widehat{R}_{\phi}} \chi(r) \Delta(\Pi^{\chi},\phi^s).\]

We then have the spectral transfer:
     \[ \Theta(\tau,f)=\sum_{x \in\mathcal{S}_{\phi}}\Delta(\tau,\phi^{x})f^{\prime}(\phi,x).\]

We then obtain the first main theorem of the paper in Section 6:
 \begin{theorem}
If $f=f_{1}\times\bar{f}_{2}, f_{i}\in \mathcal{H}(G(\R),\zeta), i=1,2,$
 then
 \[I^G_{\disc}(f) =\int_{\Phi_{\disc}(G,\zeta)}\sum_{s\in \mathcal{E}_{\phi,\elll}}|\mathcal{S}_{\phi}|^{-1}|\pi_{0}(\overline{S}_{\phi,s})|^{-1}\sigma(\overline{S}_{\phi,s}^{\circ}) f^{\prime}_{1}(\phi,s)\overline{f^{\prime}_{2}(\phi,s)}d\phi .                 \]

\end{theorem}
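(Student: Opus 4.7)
The plan is to begin from the spectral expansion
\[ I^G_{\disc}(f) = \int_{T_{\disc}(G,\zeta)} i^G(\tau)\, f_G(\tau)\, |R_\pi|^{-1}\, d\tau \]
recorded in the introduction, substitute the expression for $i^G(\tau)$ in terms of Langlands parameter data, expand each factor of $f_G(\tau) = \Theta(\tau,f_1)\Theta(\tau^\vee,\bar f_2)$ using the spectral transfer identity, and collapse the resulting multiple sums via character orthogonality on $R_\pi$ and the Plancherel-type orthogonality of the pairings underlying Shelstad's transfer factors.

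First I would use the Knapp--Zuckerman--Shelstad isomorphism $R_\pi \cong R_\phi$ to reparametrize $T_{\disc}(G,\zeta)$ by pairs $(\phi,x)$ with $\phi\in\Phi_{\disc}(G,\zeta)$ and $x\in\mathcal{S}_\phi$, where $x$ is the image of $r$ under the identification. The coefficient reformulation
\[ i^G(\tau) \;=\; i_\phi(x) \;=\; \sum_{s\in\mathcal{E}_{\phi,\elll}(x)} |\pi_0(\overline{S}_{\phi,s})|^{-1}\,\sigma(\overline{S}_{\phi,s}^\circ) \]
already recalled in the excerpt turns the coefficient into data depending only on $(\phi,x)$. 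Since $\mathcal{E}_{\phi,\elll}$ is the disjoint union of the fibers $\mathcal{E}_{\phi,\elll}(x)$ as $x$ ranges over $\mathcal{S}_\phi$, the combined outer sum over $x$ and inner sum over $s\in\mathcal{E}_{\phi,\elll}(x)$ collapses to a single sum over $s\in\mathcal{E}_{\phi,\elll}$, matching the form of the right-hand side of the theorem.

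Next I would insert the spectral transfer
\[ \Theta(\tau,f) = \sum_{y\in\mathcal{S}_\phi} \Delta(\tau,\phi^y)\, f'(\phi,y), \qquad \Delta(\tau,\phi^y) = \sum_{\chi\in\widehat{R}_\phi}\chi(r)\,\Delta(\Pi^\chi,\phi^y), \]
into both factors of $f_G(\tau)$, and then perform the operation $|R_\pi|^{-1}\sum_{r\in R_\pi}$. A first application of orthogonality, for characters of $R_\pi\cong R_\phi$, collapses the resulting $(\chi,\chi')$-double sum to a diagonal sum over a single $\chi$. A second application, the dual Plancherel relation
\[ \sum_{\chi\in\widehat{R}_\phi} \Delta(\Pi^\chi,\phi^y)\,\overline{\Delta(\Pi^\chi,\phi^{y'})} \;=\; |\mathcal{S}_\phi|\,\delta_{y,y'}, \]
forces $y=y'$ and produces the expected factor $|\mathcal{S}_\phi|^{-1}$. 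Matching this common value with the variable $s\in\mathcal{E}_{\phi,\elll}(x)$ selected by $i_\phi(x)$ yields the integrand displayed in the statement.

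The main obstacle I anticipate is the bookkeeping in this second step: because $f_G(\tau)$ is built from $\Theta(\tau^\vee,\bar f_2)$ rather than $\overline{\Theta(\tau,f_2)}$, one must verify that the transformation of Shelstad's spectral transfer factors under $\tau\mapsto\tau^\vee$ yields the complex conjugate $\overline{\Delta(\tau,\phi^y)}$ paired with $\overline{f'_2(\phi,y)}$, so that the Plancherel orthogonality above becomes applicable. Establishing this conjugation behaviour in the $K$-group setting of Section 2.2 of \cite{P}, with the normalizations of \cite{S2,S3} carried over consistently, is the technical point on which the argument turns; once it is in place, the remaining manipulations are a direct combinatorial computation.
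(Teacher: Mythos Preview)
Your approach differs from the paper's and, as written, contains a gap.

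The paper does \emph{not} expand both factors of $f_G(\tau)$. It expands only $\Theta(\tau,f_1)$ via (6.3), rewrites $\Delta(\tau,\phi^x)=\frac{|R_\phi|}{|\mathcal{S}_\phi|}\overline{\Delta(\phi^x,\tau)}$ using Lemma~5.5, interchanges the $T_{\disc}$-integral with the sum over $(\phi,x)$ (Lemma~6.3), and then invokes the vanishing in Lemma~5.3 (the adjoint factor $\Delta(\phi^x,\tau)$ is zero unless the $R_\phi$-component of $x$ equals $r$) to replace $i^G(\tau)$ by $i_\phi(x)$ inside the $\tau$-sum. With $i_\phi(x)$ pulled outside, the remaining sum $\sum_{\tau}\overline{\Delta(\phi^x,\tau)\Theta(\tau,f_2)}$ collapses to $\overline{f'_2(\phi,x)}$ by the \emph{inverse} transfer identity (6.4). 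No double expansion and no Plancherel-type orthogonality are used.

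Your symmetric route can be salvaged, but two steps fail as stated. First, your displayed Plancherel relation is false: for a \emph{fixed} $\pi\in\Pi_{\phi_M}$ the representations $\Pi^\chi$, $\chi\in\widehat{R}_\phi$, account for only $|R_\phi|$ of the $|\mathcal{S}_\phi|$ members of $\Pi_\phi$, and summing over this coset cannot separate all $y\in\mathcal{S}_\phi$; the correct orthogonality requires summing over all $\Pi\in\Pi_\phi$, i.e.\ over both $\pi$ and $\chi$. Your reparametrization of $T_{\disc}(G,\zeta)$ by $(\phi,x)$ with ``$x$ the image of $r$'' suppresses the $\pi$-variable, and this is precisely where the missing sum hides. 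Second, the weight $i^G(\tau)$ depends on $r$, so the step ``perform $|R_\pi|^{-1}\sum_{r\in R_\pi}$ and collapse $(\chi,\chi')$ to the diagonal'' is not a clean character orthogonality; one must first use the vanishing of Lemma~5.3 to pin $r$ to the $R_\phi$-component of $y$, and only then replace $i^G(\tau)$ by $i_\phi(y)$---which is exactly the mechanism the paper isolates in its asymmetric argument.

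Finally, the issue you single out as the ``main obstacle'' is not one: the identity $\Theta(\tau^\vee,\bar f_2)=\overline{\Theta(\tau,f_2)}$ is recorded in Section~3 as the definition of $f_G(\tau)$ and holds simply because tempered representations are unitary. No separate analysis of transfer factors under $\tau\mapsto\tau^\vee$ is required.
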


To obtain the explicit formula for the spectral side of the endoscopy local trace formula and the stable local trace formula, we use the arguments of Chapter 4 of \cite{A9}; one analyzes the coefficients by using the bijective correspondence:
             \[ \widehat{G}  \bbslash X_{\disc}(G,\zeta) \longleftrightarrow  \widehat{G} \bbslash Y_{\disc}(G,\zeta) , \]
 between the set of $\widehat{G}$-conjugacy classes of $X_{\disc}(G,\zeta)=\{(\phi,s):\phi\in\Phi_{\disc}(G,\zeta), s\in S_{\phi,\elll}\}$, and the set of $\widehat{G}$-conjugacy classes of $Y_{\disc}(G,\zeta) = \{(G^{\prime},\phi^{\prime}):G^{\prime}\in\mathcal{E}_{\elll}(G),\phi^{\prime}\in\Phi_{s-\disc}(G^{\prime}, \zeta)  \}$ (see Section 7 for details).

\bigskip
 We then obtain the following in Section 7:
\begin{theorem}
 If $f=f_{1}\times \bar{f}_{2}$, $f_{i}\in \mathcal{H}(G(\R),\zeta),i=1,2$.
 Then we have
 \begin{equation}
 I^G_{\disc}(f)=\sum_{G^{\prime}\in\mathcal{E}_{\elll}(G)}\iota(G,G^{\prime}) \widehat{S}^{G^{\prime}}_{\disc}(f^{G^{\prime}})  \end{equation}
where
\[\iota(G,G^{\prime})=|\Out_{G}(G^{\prime})|^{-1}|\overline{Z}(\widehat{G^{\prime}})^{\Gamma_{\R}}|^{-1},\]
\[\widehat{S}^{G^{\prime}}_{\disc}(f^{G^{\prime}})=\int_{\Phi_{s-\disc}(G^{\prime},\zeta)}|\mathcal{S}_{\phi^{\prime}}|^{-1}\sigma(\overline{S}_{\phi^{\prime}}^{\circ})f_{1}^{G^{\prime}}(\phi^{\prime})\overline{f_{2}^{G^{\prime}}(\phi^{\prime})}d\phi^{\prime}.\]
 \end{theorem}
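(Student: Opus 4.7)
The plan is to apply the bijective correspondence $\widehat{G} \bbslash X_{\disc}(G,\zeta) \leftrightarrow \widehat{G} \bbslash Y_{\disc}(G,\zeta)$ to the identity provided by Theorem 1.1, following the combinatorial template laid out in Chapter 4 of \cite{A9}. The starting point is the formula
\[ I^G_{\disc}(f) = \int_{\Phi_{\disc}(G,\zeta)} \sum_{s \in \mathcal{E}_{\phi,\elll}} |\mathcal{S}_\phi|^{-1}|\pi_0(\overline{S}_{\phi,s})|^{-1}\sigma(\overline{S}_{\phi,s}^{\circ}) f'_1(\phi,s)\overline{f'_2(\phi,s)}\, d\phi, \]
and the target is the endoscopic decomposition with the stated explicit values of $\iota(G,G')$ and $\widehat{S}^{G'}_{\disc}(f^{G'})$.

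First, I would verify that the integrand depends on $(\phi,s) \in X_{\disc}(G,\zeta)$ only through its $\widehat{G}$-orbit, so that the integral-sum may be viewed as living on $\widehat{G} \bbslash X_{\disc}(G,\zeta)$. The centralizer quantities $|\mathcal{S}_\phi|$, $|\pi_0(\overline{S}_{\phi,s})|$, and $\sigma(\overline{S}_{\phi,s}^\circ)$ are manifestly conjugation-invariant, while the spectral transfers $f'_i(\phi,s)$ are equivariant under $\widehat{G}$-conjugation by the construction in Section 5. Next, I would transport each ingredient across the bijection $(\phi,s) \mapsto (G',\phi')$: here $\widehat{G'}$ is (essentially) the identity component of the centralizer $C_{\widehat{G}}(s)$, and $\phi'$ is the parameter for $G'$ obtained by factoring $\phi$ through $\widehat{G'} \hookrightarrow \widehat{G}$. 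Under this correspondence the centralizer groups match, $\overline{S}^{\circ}_{\phi,s} \cong \overline{S}^{\circ}_{\phi'}$ and $\pi_0(\overline{S}_{\phi,s}) \cong \mathcal{S}_{\phi'}$, so that $\sigma(\overline{S}_{\phi,s}^\circ) = \sigma(\overline{S}_{\phi'}^\circ)$ and $|\pi_0(\overline{S}_{\phi,s})|^{-1} = |\mathcal{S}_{\phi'}|^{-1}$; the transfer identity $f'_i(\phi,s) = f_i^{G'}(\phi')$ then follows from the defining property of the spectral transfer factors.

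Having matched the integrands pointwise, the remaining step is to convert the integral over $\widehat{G}$-orbits in $X_{\disc}(G,\zeta)$ into a sum over $G' \in \mathcal{E}_\elll(G)$ paired with an integral over $\Phi_{s-\disc}(G',\zeta)$. The leftover coefficient $|\mathcal{S}_\phi|^{-1}$, together with the correction required when one passes from the $X$-parametrization to the $Y$-parametrization, must assemble into the explicit factor $|\Out_G(G')|^{-1}|\overline{Z}(\widehat{G'})^{\Gamma_\R}|^{-1}$.

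The main obstacle will be the careful bookkeeping of the combinatorial coefficients in this last regrouping. One needs to analyze the stabilizers in $\widehat{G}$ of representative pairs $(\phi,s)$ and $(G',\phi')$, and to verify via a short exact sequence argument that the discrepancy between the two orbit sizes is exactly $|\Out_G(G')|\cdot|\overline{Z}(\widehat{G'})^{\Gamma_\R}|$. This is formally parallel to the argument in Chapter 4 of \cite{A9}, but must be re-examined in the present setting to ensure that the passage from formal substitutes to actual Langlands parameters, afforded by the archimedean Langlands correspondence, does not introduce any hidden combinatorial obstruction.
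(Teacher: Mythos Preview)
Your overall strategy coincides with the paper's: start from Theorem~1.1, pass through the bijection $\widehat{G}\bbslash X_{\disc}(G,\zeta)\leftrightarrow\widehat{G}\bbslash Y_{\disc}(G,\zeta)$, and track the combinatorial Jacobians as in Chapter~4 of \cite{A9}. However, your ``pointwise matching'' step contains a genuine error that, if left unchecked, makes the final bookkeeping impossible.

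You assert that under $(\phi,s)\mapsto(G',\phi')$ one has $\overline{S}_{\phi,s}^{\circ}\cong\overline{S}_{\phi'}^{\circ}$ and $\pi_0(\overline{S}_{\phi,s})\cong\mathcal{S}_{\phi'}$, hence $\sigma(\overline{S}_{\phi,s}^{\circ})=\sigma(\overline{S}_{\phi'}^{\circ})$ and $|\pi_0(\overline{S}_{\phi,s})|=|\mathcal{S}_{\phi'}|$. This is false in general: the bar on the $G$-side means quotient by $Z(\widehat{G})^{\Gamma_\R}$, while on the $G'$-side it means quotient by $Z(\widehat{G'})^{\Gamma_\R}$, and these differ exactly by the finite group $\overline{Z}(\widehat{G'})^{\Gamma_\R}=Z(\widehat{G'})^{\Gamma_\R}/Z(\widehat{G})^{\Gamma_\R}$. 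The correct relation for the $\sigma$-constants, obtained from the defining property~(\ref{centv}) in Theorem~\ref{coefficient endos theorem}, is
\[
\sigma(\overline{S}_{\phi'}^{\circ})=\sigma\bigl(\overline{S}_{\phi,s}^{\circ}/\overline{S}_{\phi,s}^{\circ}\cap\overline{Z}(\widehat{G'})^{\Gamma_\R}\bigr)=\sigma(\overline{S}_{\phi,s}^{\circ})\cdot\bigl|\overline{S}_{\phi,s}^{\circ}\cap\overline{Z}(\widehat{G'})^{\Gamma_\R}\bigr|,
\]
and the component group on the $G'$-side is $|\mathcal{S}_{\phi'}|=\bigl|\overline{S}_{\phi,s}^{+}\cap\overline{\widehat{G'}}\big/\overline{S}_{\phi,s}^{\circ}\,\overline{Z}(\widehat{G'})^{\Gamma_\R}\bigr|$, which is not $|\pi_0(\overline{S}_{\phi,s})|$. (Note also the distinction between $\overline{S}_{\phi,s}=\Cent(s,\overline{S}_{\phi}^{\circ})$ and $\overline{S}_{\phi,s}^{+}=\Cent(s,\overline{S}_{\phi})$, which enters through $\Out_G(G',\phi')$.)

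The upshot is that the factor $|\overline{Z}(\widehat{G'})^{\Gamma_\R}|^{-1}$ in $\iota(G,G')$ does \emph{not} arise purely from the orbit-size discrepancy you describe; part of it is hidden in the mismatch between $\sigma(\overline{S}_{\phi,s}^{\circ})$ and $\sigma(\overline{S}_{\phi'}^{\circ})$, and between the two component groups. In the paper's argument the product of the original coefficient $|\mathcal{S}_{\phi}|^{-1}|\pi_0(\overline{S}_{\phi,s})|^{-1}\sigma(\overline{S}_{\phi,s}^{\circ})$ with the two orbit-correction factors $|\overline{S}_{\phi,s}^{+}/\overline{S}_{\phi,s}|^{-1}|\mathcal{S}_{\phi}|$ and $|\Out_G(G')|^{-1}|\Out_G(G',\phi')|$ is simplified, using the identities above, to $|\Out_G(G')|^{-1}|\overline{Z}(\widehat{G'})^{\Gamma_\R}|^{-1}|\mathcal{S}_{\phi'}|^{-1}\sigma(\overline{S}_{\phi'}^{\circ})$. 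Your proposal, by prematurely declaring the integrands equal, loses exactly the pieces needed to produce $|\overline{Z}(\widehat{G'})^{\Gamma_\R}|^{-1}$, and the stabilizer comparison you sketch at the end cannot recover it on its own.
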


The explicit formula for the spectral side of the stable trace formula will then be obtained in Section 8.

\bigskip

 Here is the summary of the contents of the paper. After introducing preliminaries and notations in Section 2, we recall some formulation of the invariant local trace formula of Arthur in Section 3. Then in Section 4 we introduce the basic objects that occur in the spectral side of the endoscopic local trace formula and the stable trace formula. We will study the properties of the spectral transfer factors in Section 5. We then obtain the main theorem on stabilization of the local trace formula in Section 6. The explicit formula for the spectral side of the endoscopic and stable local trace formula are then obtained in Section 7 and 8.

\section*{Acknowledgement}

The authors would like to thank the referee for the careful reading of the manuscript. For this work, Zhifeng Peng had been supported by the National Natural Science Foundation of the People's Republic of China, Grant No.11601503.

\section{Preliminaries and notation}
\label{preliminaries and notation}
Throughout the paper $G$ is a $K$-group over $\R$, which will be assumed to be quasi-split from Section 4 onwards. The center of $G$ is noted as $Z(G)$, while $Z$ stands for a fixed central induced torus in $G$ over $\R$, and $\zeta$ is a character on $Z(\R)$. Let
  \[\mathfrak{a}_{G}=\Hom(X(G)_{\R},\R)\]
  be the real vector space dual of the module $X(G)_{\R}$ of $\R$ rational characters on $G$. There is a canonical homomorphism
   \[ H_{G}:G(\R)\rightarrow \mathfrak{a}_{G}\]
   defined by
    \[e^{\langle H_{G}(x),\chi\rangle}=|\chi(x)|, x\in G(\R),\chi\in X(G)_{\R},\] where $|\cdot|$ is the absolutely valuation on $\R$. Let $A_{G}$ be the split component of the center of $G$, then
   \[\mathfrak{a}_{G}=H_{G}(G(\R))=H_{G}(A_{G})\]
and $\mathfrak{a}_G$ is the Lie algebra of $A_G$. It is convenient to fix a Haar measure on $\mathfrak{a}_{G}$. This determines a dual Haar measure on the real vector space $i\mathfrak{a}^{\ast}_{G}$. It also determines a unique Haar measure on $A_{G}(\R)$. We denote $\mathfrak{a}^{\ast}_{G,Z}$ by the subspace of linear forms on $\mathfrak{a}_{G}$ that are trivial on the image of $\mathfrak{a}_{Z}$ in $\mathfrak{a}_{G}$. 

\bigskip

We recall some settings as in \cite{A2}. Denote by $\mathcal{L}^{G}(M)$ the finite set of Levi subgroups of $G$ which contain a given Levi subgroup $M$. We let $M_{0}$ be a fixed Levi component of some minimal parabolic subgroup of $G$. Put $\mathcal{L} := \mathcal{L}^G(M_0)$. For $M \in \mathcal{L}$, denote by $\mathcal{P}^G(M)$ the set of parabolic subgroups of $G$ having $M$ as Levi component. Define $\Pi_{2}(M(\R))$ to be the set of (equivalence classes of) representations that are square integrable modulo the split center of $M$, and $\Pi_{\temp}(M(\R))$ to be the set of (equivalence classes of) tempered representations. 

\bigskip
For $M\in \mathcal{L}^{G}(M_{0})$ and $\pi \in \Pi_{2}(M(\R))$, denote by $\Pi_{\pi}(G(\R))$ the set of irreducible constituents of the induced representation $I_{P}(\pi)$, which is a finite subset of $\Pi_{\temp}(G(\R))$ and independent of the parabolic subgroup $P$. The sets $\Pi_{\pi}(G(\R))$ exhaust $\Pi_{\temp}(G(\R))$. The classification of the representations in $\Pi_{\temp}(G(\R))$ is reduced to classifying the representations in the finite sets $\Pi_{\pi}(G(\R))$, and to determining the intersection of any two such sets. The second question is answered by Harish-Chandra's work.

\bigskip
    Write $W^{G}_{0}$ for the Weyl group of the pair $(G,A_{M_0})$; for $w \in W_0^G$, we generally write $\tilde{w}$ for any representative of $w$ in $K$; here $K$ is a (fixed) maximal compact subgroup $G(\R)$ that is in good position relative to $M_0(\R)$. If $M\in \mathcal{L}^{G}(M_{0})$ and $\pi \in \Pi_{2}(M(\R))$, $wM=\tilde{w}M\tilde{w}^{-1}$ is another Levi subgroup, and
    \[(w \pi)(m^{\prime})=\pi(\tilde{w}^{-1}m^{\prime}\tilde{w}), \quad m^{\prime}\in(wM)(\R)\]
   is a representation in $\Pi_{2}((wM)(\R))$. We obtain an action
   \[(M,\pi)\rightarrow(wM, w\pi), w\in W^{G}_{0},\]
   of $W^{G}_{0}$ on the set of pairs $(M,\pi), M \in\mathcal{L}^{G}(M_{0}),\pi \in\Pi_{2}(M(\R)).$

\bigskip
As stated in Proposition 1.1 of \cite{A2}, one has the following: If $(M,\pi)$ and $(M^{\prime},\pi^{\prime})$ are any two pairs, and $(M^{\prime},\pi^{\prime})$ equals $(wM,w\pi)$ for an element $w\in W^{G}_{0},$ the subsets $\Pi_{\pi}(G(\R))$ and $\Pi_{\pi^{\prime}}(G(\R))$ of $\Pi_{\temp}(G(\R))$ are identical. Conversely, if the sets $\Pi_{\pi}(G(\R))$ and $\Pi_{\pi^{\prime}}(G(\R))$ have a representation in common, there is an element $w\in W^{G}_{0}$ such that $(M^{\prime},\pi^{\prime})=(wM,w\pi)$.

\bigskip

In this paper we shall fix the central data $(Z,\zeta)$. Thus $\mathcal{H}(G(\R),\zeta)$ is the Hecke space of smooth functions with compact support $f$ on $G(\R)$ that are left and right finite under the maximal compact subgroup $K$, and such that $f(zx) = \zeta(z)^{-1} f(x)$ for $z \in Z(\R)$ and $x \in G(\R)$. Similarly we define $\Pi_{\temp}(G(\R),\zeta)$ to be the subset of $\Pi_{\temp}(G(\R))$ consisting of those representations whose character on $Z(\R)$ is equal to $\zeta$. Similarly one defines the subsets $\Pi_2(G(\R),\zeta) := \Pi_2(G(\R)) \cap \Pi_{\temp}(G(\R),\zeta)$ and $\Pi_{\pi}(G(\R),\zeta) := \Pi_{\pi}(G(\R)) \cap \Pi_{\temp}(G(\R),\zeta)$ etc.

\bigskip

We can understand the finite set $\Pi_{\pi}(G(\R),\zeta)$ by the representation theoretic $R$-group \cite{KZ}. More generally, we can parameterize the characters of the tempered representations $\Pi_{\temp}(G(\R),\zeta)$ by the virtual characters of the sets $T(G,\zeta)$, where $T(G,\zeta)$ is the set of $G$-equivalence classes of the sets $\widetilde{T}(G,\zeta)$, with $\widetilde{T}(G,\zeta)$ being defined as $\widetilde{T}(G,\zeta)=\{\tau=(M,\pi,r): M\in \mathcal{L}^G(M_{0}),\pi\in \Pi_{2}(M(\R),\zeta),r\in R_{\pi} \}$. Here $\Pi_{2}(M(\R),\zeta)$ is as before the subset of $\Pi_2(M(\R))$ consisting of those $\pi$ whose character on $Z(\R)$ is equal to $\zeta$. Finally the representation theoretic $R$-group $R_{\pi}$ of $\pi$ is defined as the quotient of $W_{\pi}$ by
   $W^{\circ}_{\pi}$, where \[W_{\pi}=\{w\in W(\mathfrak{a}_{M}):w\pi\cong\pi\}\] is the stabilizer of $\pi$ in the Weyl group of $\mathfrak{a}_{M}$, and $W^{\circ}_{\pi}$ is the subgroup of elements $w$ in $W_{\pi}$ such that the operator $R(w,\pi)$ is a scalar ({\it c.f.} below). It is known that $W^{\circ}_{\pi}$ is a normal subgroup of $W_{\pi}$. The group $W_{\pi}^{\circ}$ is the Weyl group of a root system, composed of scalar multiples of those reduced roots $\alpha$ of $(G,A_{M})$ for which the reflection $w_{\alpha}$ belongs to $W^{\circ}_{\pi}$. These roots divide the vector space $\mathfrak{a}_{M}$ into chambers. By fixing such a chamber $\mathfrak{a}_{\pi}$, we can identify $R_{\pi}$ with the subgroup of elements in $W_{\pi}$ that preserve $\mathfrak{a}_{\pi}$.

\bigskip
     The operator
     \[R(w,\pi)=A(\pi_{w})R_{\tilde{w}^{-1}P\tilde{w}|P}(\pi), \quad w\in W_{\pi}, \pi\in \Pi_{2}(M), P\in \mathcal{P}^G(M) \]
 is an intertwining operator from $I_{P}(\pi)$ to itself. Here to define $A(\pi_w)$, first note that $\pi$ can be extended to a representation of the group $M^{\prime}(\R)$ generated by $M(\R)$ and $\tilde{w}$. We denote $\pi_{w}$ by such an extension, then the intertwining operator
        \[  A(\pi_{w}): I_{\tilde{w}^{-1}P\tilde{w}}(\pi)\longrightarrow I_{P}(\pi),  \pi\in W_{\pi}\]
between $I_{\tilde{w}^{-1}P\tilde{w}}(\pi)$ and $I_{P}(\pi)$ is defined by setting
   \[ (A(\pi_{w})\phi^{\prime})(x)=\pi_{w}(\tilde{w})\phi^{\prime}(\tilde{w}^{-1}x), \phi^{\prime}\in I_{\tilde{w}^{-1}P\tilde{w}}(\pi). \]
 Finally the intertwining operator $R_{\tilde{w}^{-1}P\tilde{w}|P}(\pi)$, and more generally 
    \[R_{Q|P}(\pi)=r_{Q|P}(\pi)^{-1}J_{Q|P}(\pi): I_{P}(\pi)\longrightarrow I_{Q}(\pi),\mbox{ for }P,Q\in \mathcal{P}^G(M),\]
is the normalized intertwining operator between the induced representations $I_{P}(\pi)$ and $I_{Q}(\pi)$, with $r_{Q|P}(\pi)$ being the normalizing factors; see the discussion on p. 85 of \cite{A2}. In addition, since we are in the archimedean case, the operators $R(w,\pi)$ can be normalized so that $R(w,\pi)$ is the identity for $w \in W_{\pi}^{\circ}$ (hence $R(r,\pi)$ is well-defined for $r \in R_{\pi}$), and such that the map $r \mapsto R(r,\pi)$ is a homomorphism on $R_{\pi}$, {\it c.f.} p. 86 of \cite{A2}. We will always work with such a normalization in what follows.

  \section{Local trace formula}

In this section we recall the formalism of the local trace formula of Arthur \cite{A2}, specifically in the archimedean case. Consider a test function $f=f_{1}\times \bar{f}_{2}$, with $f_{i}\in \mathcal{H}(G(\R),\zeta)$ being in the Hecke space of test functions. Following Arthur, it is customary to think of the components $f_1,f_2$ of $f$ as being indexed by the two elements set $V=\{\infty_1,\infty_2\}$ (being regarded as two archimedean places). The local trace formula is given by the identity $I^G_{\disc}(f) =I^G(f)$, {\it c.f.} Theorem 4.2 of \cite{A2} and Proposition 6.1 of \cite{A6}. The geometric side of the local trace formula is given by:
   \[  I^G(f)=\sum_{M\in\mathcal{L}}|W^{M}_{0}||W^{G}_{0}|^{-1}(-1)^{\dimm(A_{M}/ A_{G})}\int_{\Gamma_{G-\reg,\elll}(M,V,\zeta)}I^G_{M}(\gamma,f) d\gamma\]
   defined in terms of the invariant distributions $I^G_{M}(\gamma,f)$ ({\it c.f. loc. cit.}) For the definition of $\Gamma_{G-\reg,\elll}(M,V,\zeta)$, firstly, for a fixed basis $\Gamma(M,\zeta)$ of the space of invariant distributions $\mathcal{D}(M,\zeta)$ on $M(\R)$ introduced in Section 1 of \cite{A5} (which in particular are $\zeta$-equivariant under translation by $Z(\R)$), one has the subset $ \Gamma_{G-\reg,\elll}(M,\zeta)$ of $\Gamma(M,\zeta)$ consisting of elements that are strongly $G$-regular, elliptic support in $M(\R)$. Put $\Gamma(M_V,\zeta_V)= \Gamma(M,\zeta) \times \Gamma(M,\zeta)$ (corresponding to the two places $\infty_1,\infty_2$ in $V$). Then $\Gamma_{G-\reg,\elll}(M,V,\zeta)$ is identified with the diagonal image of $\Gamma_{G-\reg,\elll}(M,\zeta)$ in $\Gamma(M_V,\zeta_V)$:
\[
\Gamma_{G-\reg,\elll}(M,V,\zeta)  = \{(\gamma,\gamma):\gamma \in \Gamma_{G-\reg,\elll}(M,\zeta)\}.
 \]   

\bigskip
The other side is the spectral side given by:
     \[I^G_{\disc}(f)=\int_{T_{\disc}(G,\zeta)}i^{G}(\tau) f_{G}(\tau) |R_{\pi}|^{-1} d\tau. \]
Here: \[T_{\disc}(G,\zeta)=\{\tau=(M,\pi,r)\in T(G,\zeta): W_{\pi}(r)_{\reg}\neq\emptyset, \pi\in\Pi_{2}(M(\R),\zeta)\}, \]

  $W_{\pi}(r)_{\reg}=W_{\pi}(r)\cap W_{\pi,\reg}, W_{\pi}(r)=W_{\pi}^{\circ} \cdot r$ , $W_{\pi,\reg}=\{w\in W_{\pi}:\mathfrak{a}^{w}_{M}=\mathfrak{a}_{G}\},$ and
  \[i^{G}(\tau)=|W^{\circ}_{\pi}|^{-1}\sum_{w\in W_{\pi}(r)_{\reg}}\varepsilon_{\pi}(w)|\det(w-1)_{\mathfrak{a}^{G}_{M}}|^{-1}\]
  which encode combinatorial data from Weyl groups. The sign $\varepsilon_{\pi}(w)$ stands for the sign of projection of $w$ onto the Weyl group $W^{\circ}_{\pi}$ taken relative to the decomposition $W_{\pi}=W_{\pi}^{\circ}\rtimes R_{\pi}$, and $\mathfrak{a}_M^G$ is the quotient of $\mathfrak{a}_M$ by $\mathfrak{a}_G$. 

\bigskip
The other terms are defined as:

   \[f_{G}(\tau)=\Theta(\tau,f_1) \Theta(\tau^{\vee},\bar{f}_2)   =  \Theta(\tau,f_1) \overline{\Theta(\tau,f_2) } \]

\[ \Theta(\tau,f_i)=\tr(R(r,\pi)I_{P}(\pi,f_{i})), \,\ i=1,2.\]

\bigskip

Finally the measure $d\tau$ on $T_{\disc}(G,\zeta)$ is defined by the formula ({\it c.f.} equation (3.5) of \cite{A2}):

 \begin{eqnarray*} 
 \int_{T_{\disc}(G,\zeta)}f(\tau)d\tau &=&\sum_{\tau\in T_{\disc}(G,\zeta)/ i\mathfrak{a}^{\ast}_{G,Z}} \int_{i\mathfrak{a}^{\ast}_{G,Z}}f(\tau_{\lambda})d\lambda.\end{eqnarray*}
for $f \in C_c(T_{\disc}(G,\zeta))$.

\bigskip

Here when compared to equation (3.5) of \cite{A2}, we first note that since we are in the archimedean case, the groups $R_{\pi}$ are abelian, and hence the groups $R_{\pi,r}$ in {\it loc. cit.}, namely the centralizer of $r$ in $R_{\pi}$, reduces to $R_{\pi}$.

\bigskip

Secondly, for our purpose, it would be more convenient to {\it not} to absorb the factor $|R_{\pi,r}|^{-1}=|R_{\pi}|^{-1}$ into the definition of the measure $d \tau$ on $T_{\disc}(G,\zeta)$, as was done in \cite{A2}.

  \section{Endoscopic and stable objects on the spectral side}

  From now on $G$ will always be assumed to be a quasi-split $K$-group. We first recall the Langlands parameters. These are admissible continuous homomorphisms:
    \[\phi: W_{\R} \longrightarrow {}^{L}G \]
where as usual ${}^{L}G  = \widehat{G}  \rtimes W_{\R}$ is the $L$-group of $G$, defined with respect to a splitting of $G$ (that we fix for the rest of the paper); $W_{\R}$ is the Weil group of $\R$: it is a non-split extension $1 \rightarrow \C^{\times} \rightarrow W_{\R} \rightarrow \Gamma_{\R} \rightarrow 1$, with $\Gamma_{\R}=\Gal(\C/\R)$. The parameter $\phi$ is called bounded, if the image of $W_{\R}$ in $\widehat{G}$ is bounded. We denote by $\Phi(G)$ for the set of $\widehat{G}$-equivalence classes of bounded parameters (with respect to the conjugation action by $\widehat{G}$). For $\phi \in \Phi(G)$, we denote by $\Pi_{\phi}$ the $L$-packet of tempered representations of $G(\R)$ associated to $\phi$. The stable character $f \mapsto f(\phi):=\sum_{\Pi \in \Pi_{\phi}} \tr \Pi(f) $ is then a stable distribution on $G(\R)$ \cite{S1}.
\bigskip

We denote by $\Phi_{2}(G)$, the set of (equivalence classes of) square-integrable parameters, for the subset of $\phi\in\Phi(G)$ that does not factor through ${}^{L}M$, for any proper Levi subgroup $M$ of $G$. For $\phi \in \Phi_2(G)$, the $L$-packet $\Pi_{\phi}$ consists of square-integrable representations of $G(\R)$.

\bigskip
For any $\phi\in \Phi(G)$, we set
    \[S_{\phi}=\Cent(Im\phi,\widehat{G}),\] \[\overline{S}_{\phi}=S_{\phi}/ Z(\widehat{G})^{\Gamma_{\R}},\] 
and \[\mathcal{S}_{\phi}=\pi_{0}(\overline{S}_{\phi}).\]
Since we are in the archimedean case, the component group $\mathcal{S}_{\phi}$ is a finite abelian elementary $2$-group \cite{S1}. In addition, since we are working in the context of a quasi-split $K$-group, one has that $\Pi_{\phi}$ is in bijection with the set of characters of $\mathcal{S}_{\phi}$; hence the cardinality of $\Pi_{\phi}$ is equal to the order of $\mathcal{S}_{\phi}$ \cite{S1,S3}.

\bigskip

\noindent One has
\[
\Phi_2(G) = \{\phi \in \Phi(G), |\overline{S}_{\phi}| < \infty \}.
\]

\bigskip

\noindent We also define the following subsets of parameters $ \Phi_{\disc}(G) , \Phi_{s-\disc}(G), \Phi_{\elll}(G),$ of $\Phi(G)$:
\[\Phi_{\disc}(G)=\{\phi \in \Phi(G),|Z(\overline{S}_{\phi})|<\infty\}, \]
with $Z(\overline{S}_{\phi}) := \Cent(\overline{S}_{\phi},\overline{S}_{\phi}^{\circ})$, and:

\[\Phi_{s-\disc}(G)=\{\phi \in \Phi(G),|Z(\overline{S}^{\circ}_{\phi})|<\infty\},\]
with $Z(\overline{S}_{\phi}^{\circ})$ being the usual center of $\overline{S}_{\phi}^{\circ}$.

\bigskip
\noindent The set of elliptic parameters $\Phi_{\elll}(G)$ is defined as the subset of $\phi \in \Phi(G)$, such that $|\overline{S}_{\phi,s}|<\infty$ for some semi-simple element $s \in \overline{S}_{\phi}$; here $\overline{S}_{\phi,s}$ is being defined as:
\[
\overline{S}_{\phi,s} := \Cent(s,\overline{S}_{\phi}^{\circ}  ).
\]

One has:
\[
\Phi_2(G) \subset \Phi_{s-\disc}(G) \subset \Phi_{\disc}(G),
\]

\[
\Phi_2(G) \subset \Phi_{\elll}(G) \subset \Phi_{\disc}(G).
\]

Also, for a central data $(Z,\zeta)$ of $G$ as before, we denote by $\Phi(G,\zeta)$ the set of parameters $\phi \in \Phi(G)$ that have character $\zeta$ with respect to $Z$, in the sense that the composition:
\[
W_{\R} \stackrel{\phi}{\rightarrow} {}^{L} G \rightarrow {}^{L} Z 
\]
corresponds to the character $\zeta$ of $Z$. Similar definition for the sets $\Phi_2(G,\zeta),\Phi_{\elll}(G,\zeta)$ etc.

\bigskip
We now define the endoscopic $R$-group, \[R_{\phi} :=W_{\phi}/ W^{\circ}_{\phi},\]
 where the Weyl groups $W_{\phi},W_{\phi}^{\circ}$ are defined as:
\[W_{\phi}=\Norm(\overline{\mathcal{T}}_{\phi},\overline{S}_{\phi})/\overline{\mathcal{T}}_{\phi}.
\] 
Here $\overline{\mathcal{T}}_{\phi}$ is defined as $A_{\widehat{M}}/(A_{\widehat{M}}\cap Z(\widehat{G})^{\Gamma_{\R}})$, with $A_{\widehat{M}}=(Z(\widehat{M})^{\Gamma_{\R}})^{\circ}$, and $M$ being the Levi subgroup of $G$ (which is unique up to conjugation) such that $\phi$ factors through ${}^{L}M$ as a square integrable parameter $\phi_M \in \Phi_2(M,\zeta)$ of $M$. Similarly
    \[W_{\phi}^{\circ}=\Norm(\overline{\mathcal{T}}_{\phi},\overline{S}^{\circ}_{\phi})/\overline{\mathcal{T}}_{\phi}. \]
  
\bigskip

One also has, by the results in section 5 of \cite{S1}, the split short exact sequence:
\begin{eqnarray}
0 \rightarrow \mathcal{S}_{\phi_M} \rightarrow \mathcal{S}_{\phi} \rightarrow R_{\phi} \rightarrow 0.
\end{eqnarray}

\bigskip

For $\phi \in \Phi(G,\zeta)$, we denote by 
\[T_{\phi}=\{\tau=(M,\pi,r) \in T(G,\zeta), \mbox{ such that } \pi\in\Pi_{\phi_{M}} \} .\]
 \begin{lemma}
If $\phi\in\Phi(G,\zeta)$, then we have canonical identification $R_{\pi}=R_{\phi}, W_{\pi}=W_{\phi}, W^{\circ}_{\pi}=W^{\circ}_{\phi}$, for $\tau = (M,\pi,r) \in T_{\phi}$. 

\bigskip

Thus we have a natural surjective map of sets $T_{\phi} \rightarrow R_{\phi}$ by sending $\tau = (M,\pi,r)$ to $r \in R_{\pi} = R_{\phi}$.

\bigskip

We can also define a non-canonical bijection $\iota:T_{\phi} \rightarrow \mathcal{S}_{\phi}$ (which we fix once and for all) that respects the natural projection map to $R_{\phi}$.
\end{lemma}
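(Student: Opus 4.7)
The plan is to derive the first assertion from the works of Knapp-Zuckerman \cite{KZ} and Shelstad \cite{S1}, which together establish that the representation-theoretic $R$-group $R_\pi$ of $\pi \in \Pi_{\phi_M}$ coincides canonically with the endoscopic $R$-group $R_\phi$ attached to $\phi$. Recall that $\overline{\mathcal{T}}_\phi$ is built from $A_{\widehat{M}} = (Z(\widehat{M})^{\Gamma_\R})^\circ$, which is Langlands dual to $A_M$, so that the ambient Weyl group $\Norm(\overline{\mathcal{T}}_\phi, \widehat{G})/\overline{\mathcal{T}}_\phi$ is canonically identified with $W(\mathfrak{a}_M)$. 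Under this identification, the subgroup $W_\phi = \Norm(\overline{\mathcal{T}}_\phi, \overline{S}_\phi)/\overline{\mathcal{T}}_\phi$ consists precisely of those Weyl elements that fix $\phi$ up to $\widehat{G}$-conjugation, and hence fix the $L$-packet $\Pi_{\phi_M}$; by Shelstad's parameterization of tempered $L$-packets for $M$, this is exactly $\{w \in W(\mathfrak{a}_M) : w\pi \cong \pi\} = W_\pi$. The normal subgroup $W_\phi^\circ$, arising from the identity component $\overline{S}_\phi^\circ$, matches $W_\pi^\circ$ via the Knapp-Zuckerman description of $W_\pi^\circ$ as the Weyl group of the root system of those reduced roots $\alpha$ of $(G, A_M)$ for which the normalized intertwining operator $R(w_\alpha, \pi)$ is scalar. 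Taking quotients yields $R_\phi = R_\pi$.

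With the identification $R_\pi = R_\phi$ in hand, the assignment $(M,\pi,r) \mapsto r$ is well-defined on $T_\phi$: since $R_\pi$ is abelian and the $W_0^G$-equivalence of triples acts by conjugation on the third component, the class of $r$ is preserved. Surjectivity is immediate, as $\Pi_{\phi_M}$ is non-empty by \cite{S1}, so for any $r \in R_\phi$ one obtains a triple $(M,\pi,r) \in T_\phi$ mapping to $r$.

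For the bijection $\iota$, we combine the parameterization $\Pi_{\phi_M} \simeq \widehat{\mathcal{S}}_{\phi_M}$, which is bijective in the quasi-split $K$-group setting by \cite{S1, S3}, with the fact that $\mathcal{S}_{\phi_M}$ is a finite abelian elementary $2$-group, hence non-canonically isomorphic to its Pontrjagin dual. This gives a non-canonical $\Pi_{\phi_M} \simeq \mathcal{S}_{\phi_M}$. The preceding paragraph further identifies $T_\phi$ with $\Pi_{\phi_M} \times R_\phi$ (the fibers of the projection to $R_\phi$ being copies of $\Pi_{\phi_M}$). Using a chosen splitting of the short exact sequence $0 \to \mathcal{S}_{\phi_M} \to \mathcal{S}_\phi \to R_\phi \to 0$ provided by section 5 of \cite{S1}, we obtain
\[
T_\phi \;\simeq\; \Pi_{\phi_M} \times R_\phi \;\simeq\; \mathcal{S}_{\phi_M} \times R_\phi \;\simeq\; \mathcal{S}_\phi,
\]
which by construction respects the projection to $R_\phi$. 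Fixing such a composite once and for all yields $\iota$.

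The main obstacle is the canonicity of $W_\pi = W_\phi$ and $W_\pi^\circ = W_\phi^\circ$, which reconciles two a priori different descriptions of these Weyl groups: one via intertwining operators on the representation-theoretic side, and one via centralizers in $\widehat{G}$ on the dual side. This reconciliation is precisely the content of the main theorems of \cite{KZ} and \cite{S1}, which we invoke rather than reprove; the remaining content of the lemma then follows from elementary manipulations with the split exact sequence and Pontrjagin duality for finite abelian $2$-groups.
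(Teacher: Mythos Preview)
Your overall strategy---invoking Knapp--Zuckerman and Shelstad, then assembling $\iota$ from the split exact sequence---matches the paper's. However, your direct argument for $W_\phi = W_\pi$ has a gap. You assert that $W_\phi$, being the stabilizer of the $L$-packet $\Pi_{\phi_M}$, equals $\{w : w\pi \cong \pi\} = W_\pi$ ``by Shelstad's parameterization.'' But $W_\pi$ is the stabilizer of the \emph{individual} representation $\pi$, not of the packet; only the inclusion $W_\pi \subseteq W_\phi$ is immediate (from disjointness of tempered $L$-packets: if $w\pi \cong \pi$ then $w\phi_M \sim \phi_M$). The reverse inclusion---that every $w$ fixing $\phi_M$ fixes each $\pi \in \Pi_{\phi_M}$ rather than merely permuting the packet---does not follow from the bijection $\Pi_{\phi_M} \simeq \widehat{\mathcal{S}}_{\phi_M}$ alone. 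The same issue affects your claim $W_\phi^\circ = W_\pi^\circ$: Shelstad's results show that intertwining operators coming from $W_\phi^\circ$ are scalar, giving $W_\phi^\circ \subseteq W_\pi^\circ$, but you do not address the reverse inclusion.

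The paper closes this by reversing your order of deduction. It first establishes $R_\pi = R_\phi$ canonically, by passing to the Levi $\widetilde{M}$ through which $\phi$ factors as an \emph{elliptic} parameter $\phi^{\widetilde{M}}$ and using the canonical isomorphism $\mathcal{S}_{\phi^{\widetilde{M}}} \cong \mathcal{S}_\phi$ together with the elliptic case (handled in \cite{P}). With the two easy inclusions $W_\pi \subseteq W_\phi$ and $W_\phi^\circ \subseteq W_\pi^\circ$ in hand, the equality $|R_\pi| = |R_\phi|$ then forces both to be equalities by counting: from the chain $W_\phi^\circ \subseteq W_\pi^\circ \subseteq W_\pi \subseteq W_\phi$ one has $|R_\phi| = |W_\phi/W_\pi| \cdot |R_\pi| \cdot |W_\pi^\circ/W_\phi^\circ|$, which kills the outer factors. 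Your construction of $\iota$ via a splitting of $0 \to \mathcal{S}_{\phi_M} \to \mathcal{S}_\phi \to R_\phi \to 0$ is fine and agrees with the paper's.
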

\begin{proof}
This follow from the results of Knapp-Zuckerman \cite{KZ} and Shelstad \cite{S1}. If $\phi \in \Phi_{\elll}(G,\zeta)$ is elliptic, see the discussion before Proposition 5.2 of \cite{P}. In general, if $\phi\in\Phi(G,\zeta)$, then there is a Levi subgroup $\widetilde{M}$ of $G$ (unique up to conjugacy), that is maximal with respect to the property that the parameter $\phi$ factors through ${}^{L}\widetilde{M}$ as an elliptic parameter $\phi^{\widetilde{M}}$ of $\widetilde{M}$ (see section 5 of \cite{S1}). Under the Levi embedding ${}^{L} \widetilde{M} \hookrightarrow {}^{L}G$ on the dual side, have a canonical isomorphism \[\mathcal{S}_{\phi^{\widetilde{M}}}\longrightarrow\mathcal{S}_{\phi}.\]
   However $\phi^{\widetilde{M}}$ is elliptic, and so the lemma is true for $\phi^{\widetilde{M}}$; thus we have canonical isomorphisms (noting that the $R$-group of $\pi$ with respect to $G$ is canonically identified with the $R$-group of $\pi$ with respect to $\widetilde{M}$, {\it c.f. loc. cit.}): 
   \[ R_{\pi}=R_{\phi^{\widetilde{M}}}=R_{\phi}.\]

\bigskip

To show that one has canonical identifications between $W_{\pi},W_{\phi}$ and $W_{\pi}^{\circ},W_{\phi}^{\circ}$, first note that $W_{\phi}$ is the stabilizer of $\phi_{M}$ in $W(\mathfrak{a}_M)$. It is then a consequence of the disjointness of tempered $L$-packets that $W_{\phi}$ contains $W_{\pi}$. On the other hand, we know that the intertwining operators on $I_{P}(\pi)$ coming from the subgroup $W^{\circ}_{\phi}$ of $W_{\phi}$ are scalars ({\it c.f.} section 5 of \cite{S1}). It follows that $W^{\circ}_{\phi}$ is contained in the subgroup $W^{\circ}_{\pi}$ of $W_{\pi}$. Now $R_{\pi}=W_{\pi}/W^{\circ}_{\pi}, R_{\pi}=W_{\phi}/W^{\circ}_{\phi}$, and we already know that $|R_{\pi}|=|R_{\phi}|$, so it follows that we must have $W_{\pi}=W_{\phi}$ and $W^{\circ}_{\pi}=W^{\circ}_{\phi}.$

\bigskip

Finally, one can thus constructs a bijection $\iota:T_{\phi} \rightarrow \mathcal{S}_{\phi}$ that respects the projection to $R_{\phi}$, from a bijection between $T_{\phi^{\widetilde{M}}}$ and $\mathcal{S}_{\phi^{\widetilde{M}}}$ that respects the projection to $R_{\phi^{\widetilde{M}}}$.

\end{proof}

\bigskip
Next we recall some generalities from \cite{A1}. We shall consider $S$ to be any connected component of a complex reductive group. Given such $S$, we denote by $S^+$ the complex reductive group generated by $S$, and by $S^{\circ}$ the identity connected component of $S^+$. Put \[Z(S)=\Cent(S,S^{\circ})\] for the centralizer of $S$ in $S^{\circ}$. Then for a choice of maximal torus $\mathcal{T}$ of $S^{\circ}$, denote by $W(S^{\circ}) = \Norm(\mathcal{T},S^{\circ})/\mathcal{T}$ the usual Weyl group of $S^{\circ}$. We can also form the Weyl set \[W(S)=\Norm(\mathcal{T},S)/ \mathcal{T}.\] Denote by $W(S)_{\reg}$ the subset of elements $w\in W(S)$ that are regular, which means that the fixed point set of $w$ in $\mathcal{T}$ is finite. We can also regard $(w-1)$ as a linear transformation on the real vector space
  \[\mathfrak{a}_{\mathcal{T}}=\Hom(X(\mathcal{T}),\R).\]
Then the condition for $w \in W(S)_{\reg}$ is equivalent to that $\det(w-1)_{\mathfrak{a}_{\mathcal{T}}} \neq 0$.

\bigskip

We denote by:
      \[s^{\circ}(w)=\pm 1\]
    the sign of a element $w\in W$, to be $-1$ raised to the power of the number of the positive roots of $(S^{\circ},\mathcal{T})$ (with respect to some order) being mapped by $w$ to the negative roots. We then define the rational number
      \[i(S)=|W(S^{\circ})|^{-1}\sum_{w\in W(S)_{\reg}}s^{\circ}(w)|\det(w-1)_{\mathfrak{a}_{\mathcal{T}}}|^{-1}\]
associated to $S$.

\bigskip

Next we write $S_{ss}$ for the set of semisimple elements in $S$. For any $s\in S_{ss}$, we set \[S_{s}=\Cent(s,S^{\circ})\]
      the centralizer of $s$ in $S^{\circ}$. Then $S_{s}$ is also a complex reductive group, whose identity component is noted as: \[S^{\circ}_{s}=(S_{s})^{\circ}=\Cent(s,S^{\circ})^{\circ}.\]
      If $\Gamma$ is any subset of $S$ which is invariant under conjugation by $S^{\circ}$, then we shall denote by $\mathcal{E}(\Gamma)$ for the set of equivalence classes in $\Gamma_{ss}=\Gamma\cap S_{ss}$, with the equivalence relation defined by setting $s^{\prime}\sim s$ if \[s^{\prime}=s^{\circ}z s (s^{\circ})^{-1}, s^{\circ}\in S^{\circ},z\in Z(S^{\circ}_{s})^{\circ}.\]
      The main interest is the subset
      \[ S_{\elll}=\{s\in S_{ss}:|Z(S^{\circ}_{s})|<\infty\}\]
      of elliptic elements of $S$. The equivalence relation on $S_{\elll}$ is then simply $S^{\circ}$-conjugation. Put:
 \[\mathcal{E}_{\elll}(S):=\mathcal{E}(S_{\elll}).\]

\bigskip

      We have the following theorem which is a restatement of theorem $8.1$ of \cite{A1}.

      \begin{theorem} \label{coefficient endos theorem}
        There are unique constants $\sigma(S_{1})$, defined whenever $S_{1}$ is a connected complex reductive group, such that for any $S$ as above, the number
        \begin{equation}\label{coefid}
        e(S)=\sum_{s\in\mathcal{E}_{\elll}(S)}|\pi_{0}(S_{s})|^{-1}\sigma(S^{\circ}_{s})
        \end{equation}
        equals $i(S)$, and such that
        \begin{equation}\label{centv}
        \sigma(S_{1})=\sigma(S_{1} / Z_{1})|Z_{1}|^{-1}
      \end{equation}
        for any central subgroup $Z_{1}$ of $S_{1}$ (in particular $\sigma(S_1)=0$ if $Z_1$ is infinite).

      \end{theorem}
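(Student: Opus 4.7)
The plan is to prove existence and uniqueness simultaneously by induction on $\dim S_1$. The key observation is that the identity \eqref{coefid} applied with the particular choice $S = S_1$, i.e., letting the component $S$ coincide with its identity component $S^\circ = S_1$, already determines $\sigma(S_1)$ recursively. When $S = S^\circ$ and $Z(S_1)$ is finite, each central element $z \in Z(S_1)$ is elliptic (since $Z((S_1)_z^\circ) = Z(S_1)$) and forms its own $S^\circ$-orbit (since it is central) with connected centralizer $(S_1)_z = S_1$; such $z$ contributes $\sigma(S_1)$ to the sum in \eqref{coefid}. The remaining elliptic classes have representatives $s$ with $(S_1)_s^\circ$ a proper reductive subgroup of strictly smaller dimension. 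Separating the two types of contributions yields the recursive relation
\begin{equation*}
i(S_1) \;=\; |Z(S_1)|\,\sigma(S_1) \;+\; \sum_{s \in \mathcal{E}_{\elll}(S_1)\setminus Z(S_1)} |\pi_0((S_1)_s)|^{-1}\,\sigma((S_1)_s^\circ),
\end{equation*}
which determines $\sigma(S_1)$ uniquely in terms of $\sigma$-values on groups of strictly smaller dimension. If instead $S_1$ has positive-dimensional center, applying \eqref{centv} with $Z_1 = Z(S_1)^\circ$ forces $\sigma(S_1) = 0$, so the recursion is nontrivial only for semisimple $S_1$.

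For existence, I would take the displayed identity as the recursive \emph{definition} of $\sigma(S_1)$ (and set $\sigma(S_1) = 0$ whenever $Z(S_1)$ is positive-dimensional). The centrality relation \eqref{centv} is then verified inductively by comparing the defining recursions for $S_1$ and $S_1 / Z_1$: the quotient map induces a $|Z_1|$-to-$1$ correspondence on elliptic semisimple equivalence classes that respects the passage to connected centralizers, and one has $i(S_1) = i(S_1/Z_1)$ since the two groups share the same root system, Weyl set, and induced action on $\mathfrak{a}_{\mathcal{T}}$. Combining these facts with the inductive hypothesis applied to the smaller groups $(S_1)_s^\circ$ appearing in the sum gives \eqref{centv}.

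The main obstacle is verifying that this recursively defined $\sigma$ satisfies \eqref{coefid} for a \emph{general} component $S$ with $S \neq S^\circ$. By construction the identity is forced only for $S = S^\circ$, and for a nontrivial component one must compare the same $\sigma$-values against a different Weyl-theoretic decomposition. The strategy is a second induction on $\dim S^\circ$: for each class $s \in \mathcal{E}_{\elll}(S)$ the centralizer $S_s^\circ$ is a connected reductive group of strictly smaller dimension (since $s \notin Z(S^+)$ in general), so the already defined $\sigma((S_s)^\circ)$ fits into the inductive framework. One then expands $i(S)$ by partitioning $W(S)_{\reg}$ according to the semisimple part of its fixed-point data on the maximal torus, and matches the contributions term-by-term against those of $e(S)$. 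The combinatorics organizing this matching, which uses the structure of $S$ as an $S^\circ$-torsor together with the classification of elliptic semisimple conjugacy classes by their connected centralizers, is the technical heart of the argument and is carried out in Section 8 of \cite{A1}.
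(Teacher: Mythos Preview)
The paper does not give its own proof of this theorem: it is stated as ``a restatement of theorem 8.1 of \cite{A1}'' and the proof is simply deferred to Arthur's original paper. Your proposal is a faithful outline of Arthur's argument in Section 8 of \cite{A1} (which you yourself cite at the end), so your approach and the paper's are the same by construction.
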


\bigskip

Now back to the situation of Lemma 4.1. For $\tau \in T_{\phi}$, put $x = \iota(\tau)$. Then we can write the coefficient $i^G(\tau)$ as:

\begin{eqnarray}
i^{G}(\tau) &=& i_{\phi}(x)=|W^{\circ}_{\phi}|^{-1}\sum_{w\in W_{\phi}(x)_{\reg}}s^{\circ}_{\phi}(w)|\det(w-1)_{\mathfrak{a}^{G}_{M}}|^{-1} \\
&=  & e_{\phi}(x) = \sum_{s\in \mathcal{E}_{\phi,\elll}(x)}|\pi_{0}(\overline{S}_{\phi,s})|^{-1}\sigma(\overline{S}_{\phi,s}^{\circ}) \nonumber
\end{eqnarray}

\noindent i.e. $i_{\phi}(x)$ is equal to the number $i(S_x)$ above, with $S_x$ being equal to the component of $\overline{S}_{\phi}$ that corresponds to $x \in \mathcal{S}_{\phi} = \pi_0(\overline{S}_{\phi})$, and $W_{\phi}(x)_{\reg}$ is the set $W(S_x)_{\reg}$ as defined above; similarly $e_{\phi}(x)$ is the number $e(S_x)$, with $\mathcal{E}_{\phi,\elll}(x)$ (resp. $\mathcal{E}_{\phi,\elll}$) being the set $\mathcal{E}_{\elll}(S_x)$ (resp. $\mathcal{E}_{\elll}(\overline{S}_{\phi})$) defined as above, etc.

\bigskip

Finally we note the following. Given $\phi \in \Phi(G,\zeta)$ as before, that factors through ${}^L M$ as a square-integrable parameter $\phi_M$ of Levi subgroup $M$ of $G$. Recall that one has a split short exact sequence:
\begin{eqnarray*}
0 \rightarrow \mathcal{S}_{\phi_M} \rightarrow \mathcal{S}_{\phi} \rightarrow R_{\phi} \rightarrow 0.
\end{eqnarray*}
Then for $x,y \in \mathcal{S}_{\phi}$, one has $i_{\phi}(x) = i_{\phi}(y)$ if $x = y \mod{\mathcal{S}_{\phi_M}}$. This follows easily from the definition of the number $i_{\phi}(x)$.

\bigskip

      The constants $\sigma(\overline{S}_{\phi,s}^{\circ})$, for $s \in \mathcal{E}_{\phi,\elll} = \mathcal{E}_{\elll}(\overline{S}_{\phi})$, appear as the coefficients of the endoscopic and stable local trace formula, as we are going to see in the following sections.

      \section{Spectral transfer}

        Suppose $({G^{\prime},s^{\prime},\mathcal{G}^{\prime},\xi^{\prime}})$ is an endoscopic datum for $G$. In the theory of endoscopy one chooses a $Z$-pair $(G^{\prime}_{1},\xi_{1}^{\prime})$, where $G^{\prime}_1$ is a $Z$-extension of $G^{\prime}$ and $\xi_1^{\prime}$ is an embedding of extensions $\mathcal{G}^{\prime} \hookrightarrow{}^{L}G_{1}^{\prime}$ that extends the embedding $\widehat{G}^{\prime} \hookrightarrow \widehat{G}^{\prime}_1$ dual to the surjection $G_1^{\prime} \rightarrow G^{\prime}$.

\bigskip

Given the endoscopic datum $({G^{\prime},s^{\prime},\mathcal{G}^{\prime},\xi^{\prime}})$ for $G$ as above, suppose that $\mathcal{G}^{\prime} = {}^{L}G^{\prime}$ (and thus $\xi^{\prime}: {}^{L}G^{\prime} \hookrightarrow {}^{L}G$ is an embedding of $L$-groups, and it is not needed to choose a $Z$-extension for $G^{\prime}$), we define a mapping \[\Phi(G^{\prime},\zeta)\longrightarrow \Phi(G,\zeta)\] by $\phi^{\prime}\mapsto \phi=\xi^{\prime} \circ\phi^{\prime}$.

\bigskip

For the general case, we refer to Section 2 of \cite{A7} and also Section 2 of \cite{S2}. This construction gives a correspondence $(G^{\prime},\phi^{\prime}) \longleftrightarrow (\phi,s)$ between pairs $(\phi,s)$, where $\phi \in \Phi(G,\zeta), s \in \overline{S}_{\phi}$ semi-simple, and pairs $(G^{\prime},\phi^{\prime})$, where $G^{\prime}=({G^{\prime},s^{\prime},\mathcal{G}^{\prime},\xi^{\prime}})$ is an endoscopic datum of $G$, and $\phi^{\prime} \in \Phi(G^{\prime},\zeta)$, {\it c.f. loc. cit.} For simplicity we always assume that $\mathcal{G}^{\prime} = {}^{L}G^{\prime}$ in what follows.

        \begin{definition}
   For $\phi^{\prime} \in \Phi(G^{\prime},\zeta)$ and $\Pi \in \Pi_{\temp}(G,\zeta)$, we say that $(\phi^{\prime},\Pi)$ is a related pair if $\phi(\Pi)$ is the image of $\phi^{\prime}$ under the map $\Phi(G^{\prime},\zeta)\longrightarrow \Phi(G,\zeta)$ associated to $\xi^{\prime}$; here $\phi(\Pi) \in \Phi(G,\zeta)$ is the Langlands parameter of $\Pi$.
        \end{definition}
        Given any $\phi^{\prime} \in \Phi(G^{\prime},\zeta)$, there is always a $\Pi \in \Pi_{\temp}(G,\zeta)$ that is related to $\phi^{\prime}$.

        \begin{definition}
        A related pair $(\phi^{\prime},\Pi)$ is $G$-regular if the parameter $\phi=\phi(\Pi)$ is $G$-regular, in the sense that we have that
          \[\Cent(\phi(\C^{\times}),\widehat{G})\] is abelian.
        \end{definition}

        Shelstad had built the spectral transfer factors $\Delta(\phi^{\prime},\Pi)$ \cite{S2} directly when the related pair $(\phi^{\prime},\Pi)$ is $G$-regular, and obtained spectral transfer identities (if the pair $(\phi^{\prime},\Pi)$ is not related, then one simply defines $\Delta(\phi^{\prime},\Pi)=0$). The general case is handled by using character identities of Hecht and Schmid, and coherent continuation of the identities from the $G$-regular case \cite{S2}.

\bigskip

 We have the following spectral transfer relations. For each $f\in \mathcal{H}(G(\R),\zeta)$, and for any endoscopic datum $G^{\prime}$ of $G$, there exists $f^{\prime}\in \mathcal{H}(G^{\prime}(\R),\zeta)$ such that the stable orbital integral of $f^{\prime}$ is equal to $f^{G^{\prime}}$, the Langlands-Shelstad transfer of $f$ to $G^{\prime}$ \cite{S1} (with respect to Whittaker normalization). Note that in particular when we take $G^{\prime}=G$, then $f^G$ is the stable orbital integral of $f$. In addition the following holds: for any endoscopic datum $G^{\prime}$ of $G$ and any tempered Langlands parameter $\phi^{\prime}$ of $G^{\prime}(\R)$, the stable character $f^{\prime}(\phi^{\prime})$ of $\phi^{\prime}$ (evaluated at $f^{\prime}$) satisfies:
          \[f^{\prime}(\phi^{\prime})=\sum_{\Pi\in\Pi_{\temp}(G,\zeta)}\Delta(\phi^{\prime},\Pi)\tr\Pi(f).\]
          
          \bigskip
Remark that, as the character $f^{\prime} \mapsto f^{\prime}(\phi^{\prime})$ is stable, it depends only on the stable orbital integral of $f^{\prime}$. Hence in the above, namely when the stable orbital integral of $f^{\prime}$ is equal to the Langlands-Sehlstad transfer $f^{G^{\prime}}$, the value $f^{\prime}(\phi^{\prime})$ depends only on $f^{G^{\prime}}$, and we will write $f^{\prime}(\phi^{\prime})$ as $f^{G^{\prime}}(\phi^{\prime})$.          
          
\bigskip

Shelstad ({\it c.f.} \cite{S3}, section 11) had also checked that the spectral transfer factors can be normalized (namely Whittaker normalization) to satisfy the Arthur's conjecture, and we will always work with such normalization in what follows; more precisely given endoscopic data $(G^{\prime},s^{\prime},\mathcal{G}^{\prime},\xi^{\prime})$ as above, one has, whenever $(\phi^{\prime},\Pi)$ is a related pair, that the number $\Delta(\phi^{\prime},\Pi)$ depends only on $\Pi$ and on the image $x_s$ of $s$ in $\mathcal{S}_{\phi}$ (here $\phi$ is the Langlands parameter of $\Pi$). 

\bigskip

Thus given $\phi \in \Phi(G,\zeta)$, and $s \in \overline{S}_{\phi}$ semi-simple, for $\Pi \in \Pi_{\phi}$, we will denote 
\[
\Delta(\phi^s,\Pi) :=\Delta(\phi^{\prime},\Pi)
\]
if the pair $(\phi,s)$ corresponds to $(G^{\prime},\phi^{\prime})$. For fixed $\Pi \in \Pi_{\phi}$, the function $s \mapsto \Delta(\phi^s,\Pi)$ factors through $\mathcal{S}_{\phi}$. Thus we have the function $x \mapsto \Delta(\phi^x,\Pi)$ for $x \in \mathcal{S}_{\phi}$. In addition, this function is a $\{ \pm 1 \}$-valued character of $\mathcal{S}_{\phi}$ \cite{S3}.

\bigskip

Thus in particular, given $\phi \in \Phi(G,\zeta)$ and $s \in \overline{S}_{\phi}$ semi-simple, we can consider the linear form $f \mapsto f^{\prime}(\phi^{\prime})$, with the pair $(\phi,s)$ being corresponding to $(G^{\prime},\phi^{\prime})$. This linear form, which by construction depends only on $\phi$ and $s \in \overline{S}_{\phi}$, in fact depends only on $\phi$ and $x_s$ (where $x_s$ is as above the image of $s$ in $\mathcal{S}_{\phi}$). We will denote this linear form as $f \mapsto f^{\prime}(\phi,s)$ or $f^{\prime}(\phi^s)$, and also as $f \mapsto f^{\prime}(\phi,x)$ or $f \mapsto f^{\prime}(\phi^{x})$, i.e. for $x \in \mathcal{S}_{\phi}$:
\begin{eqnarray*}
f^{\prime}(\phi,x) &=&  f^{\prime}(\phi,s) \\
f^{\prime}(\phi^{x}) &=& f^{\prime}(\phi^{s})
\end{eqnarray*}
for any $s \in \overline{S}_{\phi}$ such that $x_s =x$.

\bigskip

Shelstad had also constructed the adjoint spectral transfer factor $\Delta(\Pi,\phi^{s})$ and established the inversion formula \cite{S3}, for $\Pi \in \Pi_{\phi}$:
\begin{eqnarray*}
\tr\Pi(f) =  \sum_{x \in \mathcal{S}_{\phi}}\Delta(\Pi,\phi^{x})f^{\prime}(\phi,x).
\end{eqnarray*}
Here on the right hand side, for the adjoint spectral transfer factor, the dependence of $\Delta(\Pi,\phi^s)$ on $s$ again factors through the image $x_s$ of $s$ in $\mathcal{S}_{\phi}$, and so for $x \in \mathcal{S}_{\phi}$, we denote by $\Delta(\Pi,\phi^x)$ the value $\Delta(\Pi,\phi^s)$, for any $s \in \overline{S}_{\phi}$ such that $x_s=x$.

\bigskip
   For our purpose, we need to construct the transfer factors $\Delta(\tau,\phi^{s})$ and
         $\Delta(\phi^{s},\tau)$, in the spirit of section 5 of \cite{A3}; again these depends only on the image $x_s$ of $s$ in $\mathcal{S}_{\phi}$, and so we similarly employ the notations $\Delta(\tau,\phi^x)$ and $\Delta(\phi^x,\tau)$. If $\phi \in \Phi_{\elll}(G,\zeta)$ is elliptic, then the transfer factors $\Delta(\tau,\phi^s)$ are defined in \cite{P}:

   \[
\Delta(\tau,\phi^{s}) =\sum_{\chi \in \widehat{R}_{\pi}} \chi(r) \Delta(\Pi^{\chi},\phi^s)
                     \]
where $\tau=(M,\pi,r) \in T_{\phi}, s \in \overline{S}_{\phi}$, and $\Pi^{\chi}$ is the irreducible component of induced representation of $\pi$ whose character corresponds to that of $\chi$ (here recall that $R_{\pi}$ is an abelian elementary $2$-group, so $\chi \in \widehat{R}_{\pi}$ takes values in $\{\pm 1 \}$). We have the spectral transfer relation ({\it c.f.} equation (5.8) in \cite{P}):
 \begin{equation}\label{inverse transfer}
\Theta(\tau,f)=\sum_{x \in \mathcal{S}_{\phi}}\Delta(\tau,\phi^{x})f^{\prime}(\phi,x).
 \end{equation}

    We also define the adjoint transfer factor (still assuming $\phi \in \Phi_{\elll}(G,\zeta)$ being elliptic, and $\tau=(M,\pi,r) \in T_{\phi}$ as before):
    \[\Delta(\phi^{s},\tau)=\sum_{\chi\in\widehat{R}_{\pi}}\frac{1}{|R_{\pi}|}\chi(r) \Delta(\phi^s,\Pi^{\chi}). \]
    We then have the spectral transfer formula, for $s \in \overline{S}_{\phi}$ ({\it c.f.} equation (5.9) in \cite{P}):
    \begin{equation*}\label{inverse 2}
    f^{\prime}(\phi,s)=\sum_{\tau\in T_{\phi}}\Delta(\phi^{s},\tau)\Theta(\tau,f).
    \end{equation*}
i.e.     
    \begin{eqnarray}
f^{\prime}(\phi,x)=\sum_{\tau\in T_{\phi}}\Delta(\phi^{x},\tau)\Theta(\tau,f).
\end{eqnarray}
for $x \in \mathcal{S}_{\phi}$.

\bigskip

We have the following lemma.

 \begin{lemma}\label{deeper lemma}
  Suppose that $\phi \in \Phi(G,\zeta)$ is elliptic. Let $M$ be Levi subgroup of $G$ such that $\phi$ factors through ${}^{L}M$ as a square-integrable parameter $\phi_M \in \Phi_2(M,\zeta)$ for $M$, under which we have the split short exact sequence as in (4.1):
\[\xymatrix@C=0.5cm{
     0 \longrightarrow \mathcal{S}_{\phi_{M}} \longrightarrow  \mathcal{S}_{\phi} \longrightarrow R_{\phi}\longrightarrow 0 }.\]

Given $x \in \mathcal{S}_{\phi}$, write $x$ with respect to the above split exact sequence as $x =x_{M} \cdot  r^{\prime}$, where $x_M \in \mathcal{S}_{\phi_M}$, and $r^{\prime} \in R_{\phi}$. Then for $\tau =(M,\pi,r) \in T_{\phi}$, we have $\Delta(\phi^{x},\tau) =0$ unless $r=r^{\prime}$, in which case we have:
\[
\Delta(\phi^{x},\tau) = \Delta(\phi_M^{x_{M}}  ,\pi)
\]

  \begin{proof}
  This is a direct computation, using the properties of spectral transfer factors \cite{S3}:
     \begin{eqnarray}
      & &    \Delta(\phi^{x},\tau) = \sum_{\chi\in\widehat{R}_{\pi}}\frac{1}{|R_{\pi}|}\chi(r)  \Delta(\phi^x,\Pi^{\chi}) \\
                           &=& \sum_{\chi\in\widehat{R}_{\pi}}\frac{1}{|R_{\pi}|}\chi(r)  \chi(r^{\prime}) \Delta( \phi^{x_M}  , \Pi^{\chi}) \nonumber\\
                           &=& \delta(r,r^{\prime}) \cdot   \Delta(\phi_M^{x_M},\pi) \nonumber
                            \end{eqnarray}            where $\delta(\cdot,\cdot)$ is the Kronecker delta function. The lemma follows.
       \end{proof}
 \end{lemma}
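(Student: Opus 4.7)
The plan is to unwind the definition of $\Delta(\phi^{x},\tau)$ and reduce to a character orthogonality argument on $R_{\pi}$. By definition,
\[
\Delta(\phi^{x},\tau) = \frac{1}{|R_{\pi}|}\sum_{\chi\in\widehat{R}_{\pi}} \chi(r)\,\Delta(\phi^{x},\Pi^{\chi}),
\]
so it suffices to understand how $\Delta(\phi^{x},\Pi^{\chi})$ depends on the decomposition $x=x_{M}\cdot r^{\prime}$ coming from the split short exact sequence $0\to \mathcal{S}_{\phi_{M}}\to\mathcal{S}_{\phi}\to R_{\phi}\to 0$.

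The key input, which is the main substantive step, will be Shelstad's compatibility of the spectral transfer factors with this split exact sequence. Recall that $y\mapsto \Delta(\phi^{y},\Pi^{\chi})$ is a $\{\pm 1\}$-valued character on $\mathcal{S}_{\phi}$. Under the splitting $R_{\phi}\hookrightarrow\mathcal{S}_{\phi}$ together with the Knapp--Zuckerman/Shelstad identification $R_{\pi}=R_{\phi}$ used to index $\Pi^{\chi}$, the restriction of this character to the image of $R_{\phi}$ is precisely $\chi$. Meanwhile, for $x_{M}\in\mathcal{S}_{\phi_{M}}$, the parabolic descent of spectral transfer factors identifies $\Delta(\phi^{x_{M}},\Pi^{\chi})$ with the transfer factor at the Levi, namely $\Delta(\phi_{M}^{x_{M}},\pi)$, which is manifestly independent of $\chi$ since all the $\Pi^{\chi}$ arise from the single square-integrable $\pi$ on $M$. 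Putting the two pieces together gives
\[
\Delta(\phi^{x},\Pi^{\chi}) = \chi(r^{\prime})\,\Delta(\phi_{M}^{x_{M}},\pi).
\]

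Substituting back yields
\[
\Delta(\phi^{x},\tau) = \Delta(\phi_{M}^{x_{M}},\pi)\cdot \frac{1}{|R_{\pi}|}\sum_{\chi\in\widehat{R}_{\pi}}\chi(r)\chi(r^{\prime}),
\]
and orthogonality of characters on the finite abelian group $R_{\pi}$ (using that $R_{\pi}$ is a $2$-group, so $\chi(r^{\prime})^{-1}=\chi(r^{\prime})$) collapses the inner sum to $\delta(r,r^{\prime})$. This yields the claimed vanishing unless $r=r^{\prime}$ and the stated identity in that case.

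The main obstacle, as indicated, is justifying the factorization $\Delta(\phi^{x},\Pi^{\chi})=\chi(r^{\prime})\Delta(\phi^{x_{M}},\Pi^{\chi})$: this is not just formal, but encodes how Shelstad's Whittaker-normalized spectral transfer factors behave with respect to the split exact sequence from Section 5 of \cite{S1}, and the correct matching of the character $\chi$ of $R_{\pi}$ with the pairing on $\mathcal{S}_{\phi}$ must be invoked from \cite{S3}. Once this is in hand, the remainder of the argument is a short character-theoretic calculation.
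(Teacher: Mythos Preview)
Your proof is correct and follows essentially the same route as the paper's: both start from the defining sum over $\widehat{R}_{\pi}$, invoke Shelstad's results from \cite{S3} to obtain the factorization $\Delta(\phi^{x},\Pi^{\chi})=\chi(r^{\prime})\,\Delta(\phi^{x_{M}},\Pi^{\chi})$ together with the parabolic descent $\Delta(\phi^{x_{M}},\Pi^{\chi})=\Delta(\phi_{M}^{x_{M}},\pi)$, and then apply character orthogonality on $R_{\pi}$. Your write-up is in fact more explicit than the paper's, which compresses these two Shelstad inputs into the single phrase ``using the properties of spectral transfer factors \cite{S3}.''
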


\begin{remark}
This property of spectral transfer factor is also studied in Chapters 2 and 6 of \cite{A9}, in the form of local intertwining relations.  
\end{remark}

We now define the spectral transfer factors for general $\phi \in \Phi(G,\zeta)$, by reducing to the elliptic case. If $\phi\in\Phi(G,\zeta)$, then there exists a Levi subgroup $\widetilde{M}$ (unique up to conjugacy), that is maximal with respect to the property that $\phi$ factors through ${}^{L} \widetilde{M}$ as an elliptic parameter $\phi^{\widetilde{M}}$ for $\widetilde{M}$. With respect to the embedding ${}^{L}\widetilde{M}\hookrightarrow {}^{L}G$, the mapping \[\mathcal{S}^{}_{\phi^{\widetilde{M}}}\longrightarrow \mathcal{S}^{}_{\phi}\] is an isomorphism ({\it c.f.} proof of Lemma 4.1).
\bigskip

If $\tau=(M,\pi,r)\in T_{\phi}$, then we denote by $\tau^{\widetilde{M}}=(M,\pi,r^{\widetilde{M}})\in T_{\phi^{\widetilde{M}}}$ the corresponding element of $T_{\phi^{\widetilde{M}}}$. Here $r^{\widetilde{M}}$ corresponds to $r$ under the canonical isomorphism $R_{\phi^{\widetilde{M}}}=R_{\phi}$ ({\it c.f.} proof of Lemma 4.1).

\bigskip

We define the spectral transfer factor:
  \[\Delta(\tau,\phi^{x})=\Delta(\tau^{\widetilde{M}},(\phi^{\widetilde{M}})^{x})\]
where on the right hand side, we still denote by $x$ the element in $\mathcal{S}_{\phi^{\widetilde{M}}}$ that corresponds to $x \in \mathcal{S}_{\phi}$ under the above isomorphism between $\mathcal{S}_{\phi^{\widetilde{M}}}$ and $\mathcal{S}_{\phi}$.

 \bigskip
We have, with $f_{\widetilde{M}}$ being the descent of $f$ to $\widetilde{M}$, the identity:
  \[\Theta(\tau,f)=\Theta(\tau^{\widetilde{M}},f_{\widetilde{M}}).\]
  Similarly we have, for $x \in \mathcal{S}_{\phi}$: \[f^{\prime}(\phi,x)=(f_{\widetilde{M}})^{\prime}(\phi^{\widetilde{M}} ,x).\]

  It follows that we again have the transfer relation (generalization of (5.1)):
  \begin{eqnarray}
  \Theta(\tau,f)=\sum_{x \in\mathcal{S}_{\phi}}\Delta(\tau,\phi^{x})f^{\prime}(\phi,x).
  \end{eqnarray}

 We can similarly define the adjoint transfer factor $\Delta(\phi^{x},\tau)$, and we have the inverse transfer relation, for $x \in \mathcal{S}_{\phi}$ (generalization of (5.2))):
  \begin{eqnarray}
  f^{\prime}(\phi,x)=\sum_{\tau \in T_{\phi}} \Delta(\phi^{x},\tau)\Theta(\tau,f).
  \end{eqnarray}

 \bigskip

 We also have the following properties about the transfer factors:

 \begin{lemma}
 Let $\phi \in \Phi(G,\zeta)$ is a bounded Langlands parameter. Then we have:
 
 \begin{itemize}
 \item  $\Delta(\tau,\phi^{x})=\frac{|R_{\phi}|}{|\mathcal{S}_{\phi}|}    \overline{ \Delta(\phi^{x},\tau)}  = \frac{|R_{\phi}|}{|\mathcal{S}_{\phi}|}     \Delta(\phi^{x},\tau)$ for $\tau \in T_{\phi}$ and $x \in \mathcal{S}_{\phi}$.
 \item  we have the adjoint relations
          \[\sum_{\tau\in T_{\phi}}\Delta(\phi^{x_{1}},\tau)\Delta(\tau,\phi^{x_{2}})=\delta(x_{1},x_{2}), \mbox{ for } x_1,x_2 \in \mathcal{S}_{\phi},\]
          \[\sum_{x \in\mathcal{S}_{\phi}}\Delta(\tau_{1},\phi^{x})\Delta(\phi^{x},\tau_{2})=\delta(\tau_{1},\tau_{2}), \mbox{ for } \tau_1,\tau_2 \in T_{\phi}.\]
 \end{itemize}
 Where $\delta(\cdot,\cdot)$ is the Kronecker delta function.
 \end{lemma}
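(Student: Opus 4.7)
The plan is to reduce both claims to the case where $\phi$ is elliptic, and then to prove them by unwinding the definitions of the spectral transfer factors from Section 5 together with Shelstad's inversion formula and character orthogonality on $\mathcal{S}_{\phi_M}$ and $R_{\phi}$.

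First I would reduce to the elliptic case. For general $\phi\in\Phi(G,\zeta)$, both $\Delta(\tau,\phi^{x})$ and $\Delta(\phi^{x},\tau)$ were defined through the maximal Levi $\widetilde{M}$ via which $\phi$ factors as an elliptic parameter $\phi^{\widetilde{M}}$, and this passage preserves $\mathcal{S}_{\phi}$, $R_{\phi}$, and the assignment $\tau\mapsto \tau^{\widetilde{M}}$ by Lemma 4.1. Both assertions of the lemma for $(\phi,\tau)$ therefore become the corresponding assertions for $(\phi^{\widetilde{M}},\tau^{\widetilde{M}})$, so it suffices to treat the elliptic case.

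For the first identity, I would unwind the definitions
\begin{align*}
\Delta(\tau,\phi^{s})&=\sum_{\chi\in\widehat{R}_{\pi}}\chi(r)\,\Delta(\Pi^{\chi},\phi^{s}),\\
\Delta(\phi^{s},\tau)&=\frac{1}{|R_{\pi}|}\sum_{\chi\in\widehat{R}_{\pi}}\chi(r)\,\Delta(\phi^{s},\Pi^{\chi}),
\end{align*}
and then invoke Shelstad's inversion. The forward factor $\Delta(\phi^{x},\Pi)$ is the $\{\pm 1\}$-valued Whittaker-normalized pairing between $\mathcal{S}_{\phi}$ and $\Pi_{\phi}$, while the adjoint factor $\Delta(\Pi,\phi^{x})$, determined by $\tr\Pi(f)=\sum_{x}\Delta(\Pi,\phi^{x})f^{\prime}(\phi,x)$, must be the matrix inverse of the forward one; by orthogonality of characters of the finite abelian group $\mathcal{S}_{\phi}$ this forces $\Delta(\Pi,\phi^{x})=|\mathcal{S}_{\phi}|^{-1}\Delta(\phi^{x},\Pi)$. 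Substituting into the first display and comparing with the second yields the ratio $|R_{\pi}|/|\mathcal{S}_{\phi}|=|R_{\phi}|/|\mathcal{S}_{\phi}|$. The equality with the complex conjugate is automatic because $\chi(r)\in\{\pm 1\}$ (as $R_{\pi}$ is an elementary abelian $2$-group) and Shelstad's factors are already real, so every quantity in sight is real.

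For the adjoint relations I would use Lemma 5.4 to factor $\Delta(\phi^{x},\tau)=\delta(r,r^{\prime})\,\Delta(\phi_{M}^{x_{M}},\pi)$, where $x=x_{M}\cdot r^{\prime}$ decomposes along the split exact sequence $0\to \mathcal{S}_{\phi_{M}}\to \mathcal{S}_{\phi}\to R_{\phi}\to 0$. For $\tau_{i}=(M,\pi_{i},r_{i})$ the first identity already obtained gives
\[
\sum_{x\in\mathcal{S}_{\phi}}\Delta(\tau_{1},\phi^{x})\Delta(\phi^{x},\tau_{2})=\frac{|R_{\phi}|}{|\mathcal{S}_{\phi}|}\sum_{x_{M},\,r^{\prime}}\delta(r_{1},r^{\prime})\delta(r_{2},r^{\prime})\,\Delta(\phi_{M}^{x_{M}},\pi_{1})\Delta(\phi_{M}^{x_{M}},\pi_{2});
\]
the $r^{\prime}$-sum collapses to $\delta(r_{1},r_{2})$, while the $x_{M}$-sum equals $|\mathcal{S}_{\phi_{M}}|\,\delta(\pi_{1},\pi_{2})$ by orthogonality of the $\pm 1$-valued characters of $\mathcal{S}_{\phi_{M}}$ attached to $\pi_{1},\pi_{2}\in\Pi_{\phi_{M}}$. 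Combined with $|\mathcal{S}_{\phi}|=|R_{\phi}|\,|\mathcal{S}_{\phi_{M}}|$ this yields $\delta(\tau_{1},\tau_{2})$. The second adjoint identity follows by the symmetric calculation summing over $\tau=(M,\pi,r)$, using orthogonality of characters of $R_{\phi}$ to collapse the $r$-sum.

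The main obstacle I expect is purely bookkeeping: one must reconcile three distinct normalizing constants, namely the $1/|R_{\pi}|$ in the definition of $\Delta(\phi^{s},\tau)$, the $1/|\mathcal{S}_{\phi}|$ coming from Shelstad's inversion on $\mathcal{S}_{\phi}$, and the $|\mathcal{S}_{\phi_{M}}|$ produced by orthogonality on the Levi component, all against the split exact sequence. Once these are correctly aligned, the lemma becomes a formal consequence of elementary character theory on $\mathcal{S}_{\phi_{M}}$ and $R_{\phi}$.
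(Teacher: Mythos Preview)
Your proposal is correct and follows the same overall route as the paper: reduce to the elliptic case via the Levi $\widetilde{M}$, then handle the elliptic case. The paper's own proof simply records the reduction and cites Proposition~5.2 of \cite{P} for the elliptic case, whereas you actually carry out that computation from the definitions, Shelstad's inversion, and character orthogonality on $\mathcal{S}_{\phi_M}$ and $R_{\phi}$; so your argument is a more self-contained version of the same proof. One small correction: the factorization $\Delta(\phi^{x},\tau)=\delta(r,r^{\prime})\,\Delta(\phi_{M}^{x_{M}},\pi)$ is Lemma~5.3 in this paper, not Lemma~5.4, and in your ``symmetric calculation'' for the first adjoint relation the $r$-sum collapses via the Kronecker deltas coming from Lemma~5.3 rather than via character orthogonality on $R_{\phi}$ (the latter is only needed if you bypass Lemma~5.3 and expand directly from the definitions).
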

  This follows from the case where $\phi$ is an elliptic parameter, which is established in Proposition $5.2$ of \cite{P}.

\bigskip  
\bigskip

Finally to complete the discussion of this section, for $\tau \in T(G,\zeta), \phi \in \Phi(G,\zeta)$ and $x \in \mathcal{S}_{\phi}$, we define $\Delta(\tau,\phi^x)$ and $\Delta(\phi^x,\tau)$ to be zero if $\tau \notin T_{\phi}$.  

  \section{Stabilization of the local trace formula}

     To summarize the discussion of the previous section, we have firstly, from equation (5.4) and (5.5), the following:
     
     \begin{theorem}
     If $f\in \mathcal{H}(G(\R),\zeta)$, then for $\tau \in T(G,\zeta)$, we have:
     \begin{eqnarray} \label{general transfer}
          \Theta(\tau,f)=\sum_{(\phi,x)}\Delta(\tau,\phi^{x})f^{\prime}(\phi,x)    
      \end{eqnarray}
      where the summation is over $\phi \in\Phi(G,\zeta)$ and $x \in \mathcal{S}_{\phi}$.
            
            \bigskip
     Conversely, for $\phi \in \Phi(G,\zeta), x \in \mathcal{S}_{\phi}$, we have:
     \begin{eqnarray}\label{general inverse}
          f^{\prime}(\phi,x)=\sum_{\tau\in T(G,\zeta)}\Delta(\phi^{x},\tau)\Theta(\tau,f).     \end{eqnarray}
     \end{theorem}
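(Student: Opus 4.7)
The plan is to deduce both (6.1) and (6.2) directly from the pointwise transfer relations (5.4) and (5.5), by invoking the vanishing convention $\Delta(\tau,\phi^x) = \Delta(\phi^x,\tau) = 0$ for $\tau \notin T_\phi$ set at the end of Section 5. Under this convention, the sums on the right-hand sides of (6.1) and (6.2) --- nominally indexed by the uncountable sets $\Phi(G,\zeta)$ and $T(G,\zeta)$ --- collapse to finite sums that match the earlier identities.

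The only point requiring verification is the following uniqueness statement: for each $\tau = (M,\pi,r) \in T(G,\zeta)$ there exists a unique $\widehat{G}$-equivalence class $\phi_0 \in \Phi(G,\zeta)$ with $\tau \in T_{\phi_0}$. To establish this, I would note that $\pi \in \Pi_2(M(\R),\zeta)$ lies in exactly one $L$-packet $\Pi_{\phi_M}$, by the disjointness of tempered $L$-packets for $M(\R)$ (the same fact already invoked in the proof of Lemma 4.1). Composing $\phi_M$ with any Levi embedding ${}^L M \hookrightarrow {}^L G$ produces $\phi_0 \in \Phi(G,\zeta)$, well defined up to $\widehat{G}$-conjugacy, and by construction $\tau \in T_{\phi_0}$. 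Conversely, any class $\phi$ with $\tau \in T_\phi$ must have $\phi_M$ as its square-integrable component along $M$, and is therefore $\widehat{G}$-conjugate to $\phi_0$.

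With the uniqueness in hand, (6.1) follows by observing that the right side vanishes for $\phi \neq \phi_0$ and hence reduces to $\sum_{x \in \mathcal{S}_{\phi_0}} \Delta(\tau,\phi_0^x) f^{\prime}(\phi_0,x)$, which by (5.4) equals $\Theta(\tau,f)$. Symmetrically, for (6.2) the vanishing convention collapses the sum over $T(G,\zeta)$ to a sum over $T_\phi$ (a finite set, by Lemma 4.1), and (5.5) yields $f^{\prime}(\phi,x)$. There is no substantive obstacle beyond the uniqueness observation, which is already built into the framework of Section 4; the theorem is essentially a bookkeeping reformulation of (5.4) and (5.5) that will be convenient for the stabilization arguments in the following sections.
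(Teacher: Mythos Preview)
Your proposal is correct and follows the paper's approach exactly: the paper presents this theorem as a direct summary of equations (5.4) and (5.5), relying on the vanishing convention $\Delta(\tau,\phi^x)=\Delta(\phi^x,\tau)=0$ for $\tau\notin T_\phi$ introduced at the end of Section~5. Your additional remark on the uniqueness of $\phi_0$ simply makes explicit what the paper leaves tacit.
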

  
  The following also follows easily:
  
  \begin{lemma}\label{general deeper lemma}
  The statement of Lemma 5.3 holds for general $\phi \in \Phi(G,\zeta)$.
    \end{lemma}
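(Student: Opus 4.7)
The plan is to reduce the general case directly to the elliptic case handled in Lemma~\ref{deeper lemma}, via the definition of spectral transfer factors for general parameters given at the end of Section~5. Given $\phi \in \Phi(G,\zeta)$, let $\widetilde{M}$ be the Levi subgroup (unique up to conjugacy, maximal with the property that $\phi$ factors through ${}^L\widetilde{M}$) for which $\phi^{\widetilde{M}} \in \Phi_{\elll}(\widetilde{M},\zeta)$ is elliptic; and for $\tau = (M,\pi,r) \in T_\phi$, let $\tau^{\widetilde{M}} = (M,\pi,r^{\widetilde{M}}) \in T_{\phi^{\widetilde{M}}}$ be the corresponding triple, where $r^{\widetilde{M}} \leftrightarrow r$ under the canonical identification $R_{\phi^{\widetilde{M}}} = R_\phi$ of Lemma~4.1. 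By definition,
\[
\Delta(\phi^x,\tau) = \Delta((\phi^{\widetilde{M}})^x,\tau^{\widetilde{M}}),
\]
where on the right $x$ is viewed in $\mathcal{S}_{\phi^{\widetilde{M}}}$ via the canonical isomorphism $\mathcal{S}_{\phi^{\widetilde{M}}} \cong \mathcal{S}_\phi$.

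First I would verify that the split short exact sequence
\[
0 \to \mathcal{S}_{\phi_M} \to \mathcal{S}_\phi \to R_\phi \to 0
\]
arising from the further factorization of $\phi$ through ${}^L M$ as a square-integrable parameter $\phi_M \in \Phi_2(M,\zeta)$ coincides, under $\mathcal{S}_{\phi^{\widetilde{M}}} \cong \mathcal{S}_\phi$, with the analogous sequence
\[
0 \to \mathcal{S}_{\phi_M} \to \mathcal{S}_{\phi^{\widetilde{M}}} \to R_{\phi^{\widetilde{M}}} \to 0
\]
for the elliptic parameter $\phi^{\widetilde{M}}$. This holds because $M$ plays the role of the ``square-integrable Levi'' simultaneously for $\phi$ (inside $G$) and for $\phi^{\widetilde{M}}$ (inside $\widetilde{M}$), so the subgroup $\mathcal{S}_{\phi_M}$ injects canonically the same way into each middle term, while the identification of the quotients follows from $R_\phi = R_{\phi^{\widetilde{M}}}$.

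Granted this compatibility, any decomposition $x = x_M \cdot r' \in \mathcal{S}_\phi$ with $x_M \in \mathcal{S}_{\phi_M}$ and $r' \in R_\phi$ corresponds to a decomposition of the same shape in $\mathcal{S}_{\phi^{\widetilde{M}}}$. Applying Lemma~\ref{deeper lemma} to the elliptic parameter $\phi^{\widetilde{M}}$ of $\widetilde{M}$ and to $\tau^{\widetilde{M}}$ yields $\Delta((\phi^{\widetilde{M}})^x,\tau^{\widetilde{M}}) = 0$ unless $r^{\widetilde{M}} = r'$, in which case it equals $\Delta(\phi_M^{x_M},\pi)$. Combined with the definition of $\Delta(\phi^x,\tau)$ displayed above and with the correspondence $r \leftrightarrow r^{\widetilde{M}}$, this is exactly the assertion of Lemma~\ref{deeper lemma} for the general parameter $\phi$.

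I expect the compatibility of the two split short exact sequences to be the only non-formal point in the argument; once it is in place, the lemma is a direct unwinding of definitions. That compatibility is essentially built into Shelstad's construction recalled in Section~4 and is already implicit in the proof of Lemma~4.1, so the main work is simply to articulate it carefully rather than to prove anything genuinely new.
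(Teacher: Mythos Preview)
Your proposal is correct and is precisely the argument the paper has in mind: the paper gives no proof at all beyond the phrase ``follows easily,'' and the reduction to the elliptic case via the Levi $\widetilde{M}$ and the definition $\Delta(\phi^{x},\tau)=\Delta((\phi^{\widetilde{M}})^{x},\tau^{\widetilde{M}})$ is exactly how that reduction is meant to go. Your observation that the only non-formal point is the compatibility of the two split short exact sequences under $\mathcal{S}_{\phi^{\widetilde{M}}}\cong\mathcal{S}_{\phi}$ is apt, and, as you note, this is already implicit in the proof of Lemma~4.1.
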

 
 \bigskip
 
Now for $\phi \in \Phi(G,\zeta)$, define $T_{\phi,\disc} := T_{\phi} \cap T_{\disc}(G,\zeta)$. We have $T_{\phi,\disc}$ is non-empty if and only if $\phi \in \Phi_{\disc}(G,\zeta)$ ({\it c.f.} Section 5 of \cite{S1}). Now recall from Lemma 4.1, one has the bijection $\iota: T_{\phi} \rightarrow \mathcal{S}_{\phi}$ that respects the projection to $R_{\phi}$. Thus for $\phi \in \Phi_{\disc}(G,\zeta)$, put $\mathcal{S}_{\phi,\disc}$ to be the image of $T_{\phi,\disc}$ under the bijection $\iota: T_{\phi} \rightarrow \mathcal{S}_{\phi}$. We define:
\[\Phi^{S}_{\disc}(G,\zeta)=\{(\phi,x):\phi\in\Phi_{\disc}(G,\zeta),x \in\mathcal{S}_{\phi,\disc}\}.\]

\bigskip
  Thus for $\tau\in T_{\disc}(G,\zeta)$, on applying equation (6.1) and the above discussion, we have the transfer relation:
  \begin{equation}
  \Theta(\tau,f)=\sum_{(\phi,x)\in\Phi^{S}_{\disc}(G,\zeta)}\Delta(\tau,\phi^{x})f^{\prime}(\phi,x).
  \end{equation}
  Similarly for $\phi \in \Phi_{\disc}(G,\zeta)$ and $x \in \mathcal{S}_{\phi,\disc}$, we have, on applying equation (6.2), the inversion formula:
  \begin{eqnarray}
  f^{\prime}(\phi,x)=\sum_{\tau\in T_{\disc}(G,\zeta)}\Delta(\phi^{x},\tau)\Theta(\tau,f).
  \end{eqnarray}

\bigskip

In the following, we also denote a pair $(\phi,x) \in \Phi^S_{\disc}(G,\zeta)$ as $\phi^x$. 

\bigskip

The linear space $i\mathfrak{a}^{\ast}_{G,Z}$ acts on $\Phi_{\disc}(G,\zeta)$ through twisting: for $\phi \in \Phi_{\disc}(G,\zeta)$ and $\lambda \in i\mathfrak{a}^{\ast}_{G,Z}$
  \[\phi_{\lambda}(\omega):=\phi(\omega)|\omega|^{\lambda}, \quad \omega\in W_{\R}\]
(here the element $|\omega|^{\lambda}\in Z(\widehat{G})^{\Gamma_{\R}}$ is defined via the usual Nakayama-Tate duality). This induces the action of $i\mathfrak{a}^{\ast}_{G,Z}$ on $\Phi^S_{\disc}(G,\zeta)$, through twisting on $\Phi_{\disc}(G,\zeta)$: $(\phi^x)_{\lambda} :=(\phi_{\lambda})^x$; here we are identifying $\overline{S}_{\phi}$ and $\overline{S}_{\phi_{\lambda}}$, hence the identification for $\mathcal{S}_{\phi}$ and $\mathcal{S}_{\phi_{\lambda}}$, similarly identification for $\mathcal{S}_{\phi,\disc}$ and $\mathcal{S}_{\phi_{\lambda},\disc}$.

\bigskip

We define a measure on $\Phi_{\disc}(G,\zeta)$ and $\Phi^{S}_{\disc}(G,\zeta)$ by setting:
\[
\int_{\Phi_{\disc}(G,\zeta)}\beta_1(\phi)d\phi=\sum_{\phi \in \Phi_{\disc}(G,\zeta)/ i\mathfrak{a}^{\ast}_{G,Z}}\int_{i\mathfrak{a}^{\ast}_{G,Z}}\beta_1(\phi_{\lambda})d\lambda,
\]
   \[\int_{\Phi^{S}_{\disc}(G,\zeta)}\beta_2(\phi^{x})d\phi^{x}=\sum_{(\phi,x)\in \Phi^{S}_{\disc}(G,\zeta)/ i\mathfrak{a}^{\ast}_{G,Z}}\int_{i\mathfrak{a}^{\ast}_{G,Z}}\beta_2(\phi^{x}_{\lambda})d\lambda,\]
   for any $\beta_1\in C_c(\Phi_{\disc}(G,\zeta)), \beta_2\in C_c(\Phi^{S}_{\disc}(G,\zeta))$. We have the following lemma, similar to Lemma 5.3 of \cite{A3}:

   \begin{lemma} \label{change integrable}
   Suppose that $\alpha\in C_c(T_{\disc}(G,\zeta))$, and $\beta\in C_c(\Phi^{S}_{\disc}(G,\zeta))$. Then we have
   \begin{equation} \label{chang measure}
   \int_{T_{\disc}(G,\zeta)}\sum_{(\phi,x)\in\Phi^{S}_{\disc}(G,\zeta)}\beta(\phi^{x})\Delta(\phi^{x},\tau)\alpha(\tau)d\tau,
   \end{equation}

   \begin{equation*}\label{change measure 2}
   =\int_{\Phi^{S}_{\disc}(G,\zeta)}\sum_{\tau\in T_{\disc}(G,\zeta)}\beta(\phi^{x})\Delta(\phi^{x},\tau)\alpha(\tau)d\phi^{x}.
   \end{equation*}
\end{lemma}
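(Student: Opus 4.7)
The strategy is to unfold each side via the definition of the measures on $T_{\disc}(G,\zeta)$ and $\Phi^S_{\disc}(G,\zeta)$ as countable sums of Lebesgue integrals over $i\mathfrak{a}^*_{G,Z}$-orbits, reduce the two inner sums to genuinely finite sums using the support properties of the spectral transfer factor, and then compare term by term. The crucial structural inputs are: first, the $i\mathfrak{a}^*_{G,Z}$-equivariant bijection $\iota : T_\phi \to \mathcal{S}_\phi$ of Lemma 4.1, extended to a bijection $T_{\disc}(G,\zeta) \leftrightarrow \Phi^S_{\disc}(G,\zeta)$ sending $\tau \mapsto (\phi(\tau),\iota(\tau))$; and second, the $i\mathfrak{a}^*_{G,Z}$-equivariance of the spectral transfer factor, $\Delta(\phi_\lambda^x,\tau_\lambda) = \Delta(\phi^x,\tau)$ for $\lambda \in i\mathfrak{a}^*_{G,Z}$.

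For the left-hand side, I would decompose $\int_{T_{\disc}(G,\zeta)} d\tau$ as $\sum_{[\tau_0]} \int_{i\mathfrak{a}^*_{G,Z}} d\lambda$ and observe that, by Theorem 6.1, $\Delta(\phi^x,(\tau_0)_\lambda)=0$ unless $\phi = \phi(\tau_0)_\lambda$; hence the inner sum over $(\phi,x)$ collapses to a finite sum over $x$ in the finite set $\mathcal{S}_{\phi(\tau_0)_\lambda,\disc}=\mathcal{S}_{\phi(\tau_0),\disc}$. Applying the equivariance of $\Delta$ pulls the $\lambda$-independent factor $\Delta(\phi(\tau_0)^x,\tau_0)$ out of the Lebesgue integral, so the LHS becomes a double discrete sum over pairs $([\tau_0],x)$ of an integral involving only $\alpha$ and $\beta$. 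A symmetric unfolding of the right-hand side, using that $\Delta(\phi^x,\tau)=0$ unless $\tau\in T_\phi$ and that $|T_\phi|=|\mathcal{S}_\phi|$ is finite, yields a double discrete sum indexed by pairs $([(\phi_0,x)],x_0)$ with $x_0\in\mathcal{S}_{\phi_0,\disc}$. The $i\mathfrak{a}^*_{G,Z}$-equivariant bijection $\iota$ identifies these two indexing sets, and the resulting double sums are then identical expression by expression.

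The main technical obstacle is verifying the $i\mathfrak{a}^*_{G,Z}$-equivariance of $\Delta(\phi^x,\tau)$. For the adjoint transfer factors on irreducible tempered representations $\Pi$, namely $\Delta(\phi^x,\Pi)$, this equivariance is built into Shelstad's Whittaker-normalized construction \cite{S3}: a central unramified twist of $\phi$ and the corresponding twist of $\Pi$ leave $\Delta(\phi^x,\Pi)$ unchanged. The passage to $\Delta(\phi^x,\tau)$ is then immediate from the definitions in Section 5 and Lemma 6.2, because twisting $\tau=(M,\pi,r)$ by $\lambda$ replaces $\pi$ by $\pi_\lambda$ while preserving the $R$-group data and the characters $\chi\in\widehat{R}_\pi$ appearing in the defining formula. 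Once this equivariance is in hand, the interchange of the finite inner sums with the $\lambda$-integrals is elementary, and the matching of orbit representatives via $\iota$ completes the proof.
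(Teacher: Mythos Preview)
Your proposal is correct and follows the same skeleton as the paper's proof: unfold both sides via the definitions of the measures as sums of integrals over $i\mathfrak{a}^*_{G,Z}$, use the vanishing of $\Delta(\phi^x,\tau)$ for unrelated pairs to collapse the inner sums, and match the two resulting expressions.

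The one noteworthy difference is that you introduce more machinery than the paper uses. You invoke the $i\mathfrak{a}^*_{G,Z}$-equivariance $\Delta(\phi_\lambda^x,\tau_\lambda)=\Delta(\phi^x,\tau)$ to pull the transfer factor out of the $\lambda$-integral, and you then match the two discrete indexing sets via the bijection $\iota$. The paper avoids both of these steps: it simply chooses, for each orbit representative $\tau$, a representative $\phi^x$ with the same central character on $A_G$, observes that $\Delta(\phi^x_\mu,\tau_\lambda)=0$ unless $\mu=\lambda$, and thereby reduces the left-hand side to the manifestly symmetric expression
\[
\sum_{(\tau,\phi^x)}\int_{i\mathfrak{a}^*_{G,Z}}\beta(\phi^x_\lambda)\,\Delta(\phi^x_\lambda,\tau_\lambda)\,\alpha(\tau_\lambda)\,d\lambda,
\]
the sum being over pairs of orbits with matching central character. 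The right-hand side reduces to the identical expression by the same reasoning, so no equivariance claim for $\Delta$ and no explicit use of $\iota$ is needed. Your route works, but the paper's is shorter and sidesteps the need to verify the twist-invariance of the Whittaker-normalized transfer factors.
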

\begin{proof}
   First note that for given a $\tau$, the first inner summation is a finite sum, by the definition of the transfer factors; similarly for a given $\phi^x$, the second summation is also finite. So the identity makes sense. From the definition of measure for $T_{\disc}(G,\zeta)$, we can write the left hand side of (6.5) as:

   \begin{equation}\label{change proof 1}
   \sum_{\tau \in T_{\disc}(G,\zeta)/ i\mathfrak{a}^{\ast}_{G,Z}}\sum_{\phi^{x}\in\Phi^{S}_{\disc}(G,\zeta)/ i\mathfrak{a}^{\ast}_{G,Z}} \sum_{\mu \in i \mathfrak{a}^{\ast}_{G,Z}} \int_{i\mathfrak{a}^{\ast}_{G,Z}} \beta(\phi^{x}_{\mu})\Delta(\phi^{x}_{\mu},\tau_{\lambda})\alpha(\tau_{\lambda}) d\lambda
\end{equation}

\bigskip

For a given representative $\tau \in T_{\disc}(G,\zeta)/ i \mathfrak{a}^{\ast}_{G,Z}$ in the outer sum, we may choose the representative $\phi^x \in \Phi^S(G,\zeta)/ i \mathfrak{a}^{\ast}_{G,Z}$ in the inner sum, that has the same central character on $A_G$. 
 
\bigskip
 
Thus in the sum over $\mu$, we see that if $\lambda\neq\mu$, then $\phi^{x}_{\mu}$ and $\tau_{\lambda}$ are not related, and so by the definition of transfer factors, we have $\Delta(\phi^{x}_{\mu},\tau_{\lambda})=0$. So we can write (6.6) as:

\begin{equation}\label{change proof 2}
 \sum_{(\tau,\phi^x)} \int_{i\mathfrak{a}^{\ast}_{G,Z}}\beta(\phi^{x}_{\lambda})\Delta(\phi^{x}_{\lambda},\tau_{\lambda})\alpha(\tau_{\lambda})d\lambda.
\end{equation}
where double sum of (6.7) is a sum over the subset \[(\tau,\phi^{x})\in (T_{\disc}(G,\zeta)\times\Phi^{S}_{\disc}(G,\zeta))/ i\mathfrak{a}^{\ast}_{G,Z}\]
consisting of those pairs that have the same central character on $A_G$.

\bigskip

In a parallel way, we can treat the right hand side of (6.5) along similar lines, using the definition of measure for $\Phi^{S}_{\disc}(G,\zeta)$. On noting the symmetry of the sum-integral in (6.7), we thus conclude that the left hand side and the right hand side of (6.5) are equal. 
\end{proof}

\bigskip

We can now begin the stabilization of the spectral side of the local trace formula:

 \begin{theorem}
 If $f=f_{1}\times\bar{f_{2}}$, $f_{i}\in \mathcal{H}(G(\R),\zeta)$ for $i=1,2$, then we have:

  \begin{equation}
  I^G_{\disc}(f)=\int_{T_{\disc}(G,\zeta)}i^{G}(\tau) \Theta(\tau,f_1)  \overline{\Theta(\tau,f_2)} |R_{\pi}|^{-1} d\tau
  \end{equation}
             \[ =\int_{\Phi^{S}_{\disc}(G,\zeta)}\frac{1}{|\mathcal{S}_{\phi}|}i_{\phi}(x)f^{\prime}_{1}(\phi,x)\overline{f^{\prime}_{2}(\phi,x)}d\phi^{x}\]

 \begin{proof}
Firstly, applying the spectral transfer, as given in equation (6.3), to the term $\Theta(\tau,f_1)$ in (6.8), we see that $I^G_{\disc}(f)$ equals
\begin{eqnarray}\int_{T_{\disc}(G,\zeta)}i^{G}(\tau)|R_{\pi}|^{-1}\sum_{(\phi,x)\in \Phi^{S}_{\disc}(G,\zeta)}\Delta(\tau,\phi^{x})f^{\prime}_{1}(\phi,x)\overline{\Theta(\tau,f_2)} d\tau \end{eqnarray}
\[
= \int_{T_{\disc}(G,\zeta)}i^{G}(\tau)|R_{\pi}|^{-1}\sum_{(\phi,x)\in\Phi^{S}_{\disc}(G,\zeta)}\frac{|R_{\phi}|}{|\mathcal{S}_{\phi}|}  f^{\prime}_{1}(\phi,x)\overline{\Delta(\phi^{x},\tau) \Theta(\tau,f_2)}d\tau.
\]
with the last equality follows from the first part of Lemma 5.5. Next, in order that $\Delta(\phi^x,\tau) \neq 0$, we must have $\tau \in T_{\phi}$ and hence $|R_{\pi}|=|R_{\phi}|$ in the integrand. Thus we see that the right hand side of (6.9) can be written as:
\[
 \int_{T_{\disc}(G,\zeta)}\sum_{(\phi,x)\in\Phi^{S}_{\disc}(G,\zeta)}\frac{i^G(\tau)}{|\mathcal{S}_{\phi}|}  f^{\prime}_{1}(\phi,x)\overline{\Delta(\phi^{x},\tau) \Theta(\tau,f_2)}d\tau.
\]
By Lemma 6.3, this is equal to:

\begin{equation}\label{stable2}
 \int_{\Phi^{S}_{\disc}(G,\zeta)}\frac{1}{|\mathcal{S}_{\phi}|}f^{\prime}_{1}(\phi,x)\sum_{\tau\in T_{\disc}(G,\zeta)}i^{G}(\tau)\overline{\Delta(\phi^{x},\tau) \Theta(\tau,f_2)} d\phi^x.
 \end{equation}
 Now, denote by $P: \mathcal{S}_{\phi}\rightarrow R_{\phi}$ for the surjective map from $\mathcal{S}_{\phi}$ to $R_{\phi}$ in the split short exact sequence (4.1). Given $\tau = (M,\pi,r) \in T_{\phi}$, and  $\phi \in \Phi_{\disc}(G,\zeta)$, by Lemma 6.2, for $x \in \mathcal{S}_{\phi,\disc}$, if $P(x)\neq r,$ then $\Delta(\phi^{x},\tau)=0$. On the other hand, if $\Delta(\phi^{x},\tau)\neq 0$, then we must have $P(x)=r$, i.e. $\iota(\tau)$ and $x$ have the same image in $R_{\phi}$ under the map $P$. Thus we have:
  \[i^{G}(\tau)=i_{\phi}(\iota(\tau)) =i_{\phi}(x).\]
({\it c.f.} the discussion after the statement of Theorem 4.2, near the end of Section 4). 

\bigskip

Thus we can write (6.10) as
 \[\int_{\Phi^{S}_{\disc}(G,\zeta)}\frac{1}{|\mathcal{S}_{\phi}|}i_{\phi}(x)f^{\prime}_{1}(\phi,x)\sum_{\tau\in T_{\disc}(G,\zeta)}\overline{\Delta(\phi^{x},\tau) \Theta(\tau,f_2)} d\phi^x.\]
Applying equation (6.4), we obtain the required formula:
 \begin{equation}\label{stable lemma}
I^G_{\disc}(f)= \int_{\Phi^{S}_{\disc}(G,\zeta)}\frac{1}{|\mathcal{S}_{\phi}|}i_{\phi}(x)f^{\prime}_{1}(\phi,x)\overline{f^{\prime}_{2}(\phi,x)}d\phi^{x}.
 \end{equation}
 \end{proof}
 \end{theorem}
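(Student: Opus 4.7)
The plan is to follow the standard stabilization template: starting from the spectral expansion $I^G_{\disc}(f)=\int_{T_{\disc}(G,\zeta)} i^G(\tau)\,\Theta(\tau,f_1)\overline{\Theta(\tau,f_2)}|R_\pi|^{-1}d\tau$, I would substitute in the inversion formulas to express everything in terms of the stable characters $f'_i(\phi,x)$, then use the adjoint relations of Lemma 5.5 together with Lemma 6.2 to identify the coefficient as $i_\phi(x)/|\mathcal{S}_\phi|$.

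First I would apply the transfer relation (6.3) to rewrite $\Theta(\tau,f_1)$ as $\sum_{(\phi,x)\in\Phi^S_{\disc}(G,\zeta)}\Delta(\tau,\phi^x)f'_1(\phi,x)$. Using the first assertion of Lemma 5.5 I replace $\Delta(\tau,\phi^x)$ with $\tfrac{|R_\phi|}{|\mathcal{S}_\phi|}\Delta(\phi^x,\tau)$ (the factor is real). Since $\Delta(\phi^x,\tau)=0$ unless $\tau\in T_\phi$, on the support of the integrand we have $|R_\pi|=|R_\phi|$, so the factor $|R_\pi|^{-1}$ cancels against $|R_\phi|$, leaving
\[
I^G_{\disc}(f)=\int_{T_{\disc}(G,\zeta)}\sum_{(\phi,x)\in\Phi^S_{\disc}(G,\zeta)}\frac{i^G(\tau)}{|\mathcal{S}_\phi|}f'_1(\phi,x)\overline{\Delta(\phi^x,\tau)\Theta(\tau,f_2)}\,d\tau.
\]

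Next I would swap the order of integration using Lemma 6.3, which is permissible because, for fixed $\tau$ or fixed $\phi^x$, the other sum is finite by the support properties of the transfer factors. To complete the identification of the coefficient, I invoke Lemma 6.2 (the general form of Lemma 5.3): for $\tau=(M,\pi,r)\in T_\phi$ the vanishing $\Delta(\phi^x,\tau)=0$ holds unless $P(x)=r$ under the projection $P:\mathcal{S}_\phi\to R_\phi$ of (4.1). Combined with the observation at the end of Section 4 that $i_\phi(x)$ depends only on the image of $x$ in $R_\phi$, this yields $i^G(\tau)=i_\phi(\iota(\tau))=i_\phi(x)$ on the support of the summand, so $i^G(\tau)$ can be pulled outside the $\tau$-sum and replaced by $i_\phi(x)$.

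Finally the inner sum $\sum_{\tau\in T_{\disc}(G,\zeta)}\Delta(\phi^x,\tau)\Theta(\tau,f_2)$ is exactly the right-hand side of the inversion formula (6.4), so it collapses to $f'_2(\phi,x)$ (and its conjugate appears after conjugation). This produces the stated expression. The main obstacle is Step four: one must be careful that the equality $i^G(\tau)=i_\phi(x)$ is valid precisely on the support of the integrand after the transfer substitution, which requires both the matching of $R$-group components coming from Lemma 6.2 and the invariance of $i_\phi$ under the $\mathcal{S}_{\phi_M}$-action noted after (4.2). The other steps (applying transfer, interchanging orders, and applying the inversion) are essentially formal manipulations using the results established in Sections 4 and 5.
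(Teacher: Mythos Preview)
Your proposal is correct and follows essentially the same approach as the paper: expand $\Theta(\tau,f_1)$ via (6.3), convert $\Delta(\tau,\phi^x)$ to $\tfrac{|R_\phi|}{|\mathcal{S}_\phi|}\Delta(\phi^x,\tau)$ by Lemma~5.5, cancel $|R_\pi|$ against $|R_\phi|$, swap the order of integration by Lemma~6.3, replace $i^G(\tau)$ by $i_\phi(x)$ using Lemma~6.2 and the invariance of $i_\phi$ modulo $\mathcal{S}_{\phi_M}$, and finally collapse the $\tau$-sum via (6.4). Your identification of the one delicate point---that $i^G(\tau)=i_\phi(x)$ holds precisely on the support of $\Delta(\phi^x,\tau)$---matches the paper's treatment.
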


 We set $i_{\phi}(x)=0$, if $x \notin  \mathcal{S}_{\phi,\disc}$. Then we have:
  \begin{eqnarray*}
  I^G_{\disc}(f)=\int_{\Phi_{\disc}(G,\zeta)}\sum_{x\in\mathcal{S}_{\phi}}|\mathcal{S}_{\phi}|^{-1}i_{\phi}(x)f^{\prime}_{1}(\phi,x)\overline{f^{\prime}_2(\phi,x)}d\phi.
  \end{eqnarray*}

Recall equation (4.4) from section 4, we write:
\[i_{\phi}(x)=\sum_{s\in\mathcal{E}_{\phi,\elll}(x)}|\pi_{0}(\overline{S}_{\phi,s})|^{-1}\sigma(\overline{S}_{\phi,s}^{\circ})\]

\bigskip
Thus we obtain
 \begin{equation*}
  I^G_{\disc}(f)=\int_{\Phi_{\disc}(G,\zeta)}\sum_{x\in \mathcal{S}_{\phi}}|\mathcal{S}_{\phi}|^{-1}\sum_{s\in\mathcal{E}_{\phi,\elll}(x)}|\pi_{0}(\overline{S}_{\phi,s})|^{-1}\sigma(\overline{S}_{\phi,s}^{\circ})f^{\prime}_{1}(\phi,s)\overline{f^{\prime}_{2}(\phi,s)}d\phi \end{equation*}
in other words,  
\begin{eqnarray}
 I^G_{\disc}(f)=\int_{\Phi_{\disc}(G,\zeta)}\sum_{s \in \mathcal{E}_{\phi,\elll}}|\mathcal{S}_{\phi}|^{-1}|\pi_{0}(\overline{S}_{\phi,s})|^{-1}\sigma(\overline{S}_{\phi,s}^{\circ})f^{\prime}_{1}(\phi,s)\overline{f^{\prime}_{2}(\phi,s)}d\phi.
\end{eqnarray}

\noindent This is Theorem 1.1 as stated in the Introduction. In the next section, we are going to express the right hand side of (6.12) in the form of an endoscopic local trace formula. 

 \section{Spectral side of endoscopic local trace formula}

To obtain Theorem 1.2 as stated in the Introduction, we need to rewrite the right hand side of (6.12), in terms of the endoscopic data of $G$. To do this we need to make precise the correspondence $(G^{\prime},\phi^{\prime}) \longleftrightarrow (\phi,s)$.

    \bigskip
    
    Denote by $E_{\elll}(G)$ for the set of elliptic endoscopic data of $G$. The set $\widehat{G} \bbslash E_{\elll}(G)$ of $\widehat{G}$-orbits in $E_{\elll}(G)$ is then equal to the set $\mathcal{E}_{\elll}(G)$ of equivalence classes of elliptic endoscopic data of $G$.
   
     \bigskip
Write $F_{\disc}(G,\zeta)$ for the set of bounded Langlands parameters $\phi$ of $G$ that have character $\zeta$ with respect to $Z$ ({\it not} being regarded as up to $\widehat{G}$-equivalence), and such that $Z(\overline{S}_{\phi})$ is finite. Similarly for $G^{\prime} \in E_{\elll}(G)$, write $F_{s-\disc}(G^{\prime},\zeta)$ for the set of bounded Langlands parameters $\phi^{\prime}$ of $G^{\prime}$ that have character $\zeta$ with respect to $Z$ ({\it not} being regarded as up to $\widehat{G^{\prime}}$-equivalence), and such that $Z(\overline{S}_{\phi}^{\circ})$ is finite.

\bigskip
 The set $\widehat{G} \bbslash F_{\disc}(G,\zeta)$ of $\widehat{G}$-orbits in $F_{\disc}(G,\zeta)$ is equal to $\Phi_{\disc}(G,\zeta)$. Similarly, for $G^{\prime} \in E_{\elll}(G)$, the set $\widehat{G^{\prime}} \bbslash F_{s-\disc}(G^{\prime},\zeta)$ of $\widehat{G^{\prime}}$-orbits in $F_{s-\disc}(G^{\prime},\zeta)$ is equal to $\Phi_{s-\disc}(G^{\prime},\zeta)$.

\bigskip

    We then define:
     \[ X_{\disc}(G,\zeta):=\{(\phi,s):\phi\in F_{\disc}(G,\zeta),s\in\overline{S}_{\phi,\elll}\},\]
    and
    \[Y_{\disc}(G,\zeta):=\{(G^{\prime},\phi^{\prime}):G^{\prime}\in E_{\elll}(G),\phi^{\prime}\in F_{s-\disc}(G^{\prime},\zeta)\}.\]
    
    \bigskip
    
    The group $\widehat{G}$ acts on $X_{\disc}(G,\zeta)$ and $Y_{\disc}(G,\zeta)$ by conjugation. Denote by $\widehat{G} \bbslash X_{\disc}(G,\zeta)$ and $\widehat{G} \bbslash Y_{\disc}(G,\zeta)$ for the set of $\widehat{G}$-orbits.

\bigskip

As in the previous section, the linear space $i \mathfrak{a}^{\ast}_{G,Z}$ acts on $\widehat{G} \bbslash X_{\disc}(G,\zeta)$ and $\widehat{G} \bbslash Y_{\disc}(G,\zeta)$ by twisting, and we define measures on $\widehat{G} \bbslash X_{\disc}(G,\zeta)$ and $\widehat{G} \bbslash Y_{\disc}(G,\zeta)$ by the same type of formula that define the measure on $\Phi_{\disc}(G,\zeta) = \widehat{G} \bbslash F_{\disc}(G,\zeta)$, i.e.
\[
\int_{ \widehat{G} \bbslash X_{\disc}(G,\zeta)} =\sum_{  \widehat{G} \bbslash X_{\disc}(G,\zeta)/ i\mathfrak{a}^{\ast}_{G,Z}}\int_{i\mathfrak{a}^{\ast}_{G,Z}}
\]
   \[\int_{\widehat{G}  \bbslash Y_{\disc}(G,\zeta)}=\sum_{ \widehat{G} \bbslash  Y_{\disc}(G,\zeta)/ i\mathfrak{a}^{\ast}_{G,Z}}\int_{i\mathfrak{a}^{\ast}_{G,Z}}\]
    
\bigskip
  The correspondence $(\phi,s) \longleftrightarrow (G^{\prime},\phi^{\prime})$ induces the bijections: 
  \begin{eqnarray}
  \widehat{G} \bbslash X_{\disc}(G,\zeta) & \longleftrightarrow &\widehat{G} \bbslash Y_{\disc}(G,\zeta) \\
  \widehat{G} \bbslash X_{\disc}(G,\zeta)  / i \mathfrak{a}^*_{G,Z}  & \longleftrightarrow & \widehat{G} \bbslash Y_{\disc}(G,\zeta) / i \mathfrak{a}^*_{G,Z} \nonumber   \end{eqnarray}
which is immediately seen to be measure preserving, and which is the focal point for the transformation of the right hand side of the expression (6.12). The argument we give below is parallel to that of Section 4.4 of \cite{A9}.
   
   \bigskip
   
   The first step is to change the double sum-integral on the right hand side of (6.12) to an integral over $\widehat{G} \bbslash X_{\disc}(G,\zeta)$, using that $\Phi_{\disc}(G,\zeta) = \widehat{G} \bbslash F_{\disc}(G,\zeta)$, and also that the integrand is $\widehat{G}$-invariant. Given $\phi \in F_{\disc}(G,\zeta)$, the stabilizer of $\phi$ in $\widehat{G}$ is the centralizer $S_{\phi}$. Now the sum occurring in the integrand on the right hand side of (6.12), is over $\mathcal{E}_{\phi,\elll}=\overline{S}_{\phi}^{\circ} \bbslash \overline{S}_{\phi,\elll}$, the set of orbits in $\overline{S}_{\phi,\elll}$ under the conjugation action by the identity component $\overline{S}_{\phi}^{\circ}$ of $\overline{S}_{\phi}$. On the other hand, given $s \in \overline{S}_{\phi,\elll}$, the set of orbits under the conjugation action of $S_{\phi}$, or equivalently by $\overline{S}_{\phi}$, is bijective with the quotient of $\overline{S}_{\phi}$ by the subgroup
    \[\overline{S}^{+}_{\phi,s}=\Cent(s,\overline{S}_{\phi}).\]

   However, the $\overline{S}^{\circ}_{\phi}$-orbit of $s$ is bijective with the quotient of $\overline{S}^{\circ}_{\phi}$ by the subgroup

    \[\overline{S}_{\phi,s}=\Cent(s,\overline{S}_{\phi}^{\circ}).\]

    We can therefore rewrite the sum-integral on the right hand side of (6.12), as an integral over $\widehat{G} \bbslash X_{\disc}(G,\zeta)$, if we multiply the summand on the right hand side of (6.12) by the number:
     \[|\overline{S}_{\phi}^{\circ}/\overline{S}_{\phi,s}|^{-1} |\overline{S}_{\phi}/\overline{S}^+_{\phi,s}|  =  |\overline{S}_{\phi,s}^{+}/\overline{S}_{\phi,s}|^{-1}|\overline{S}_{\phi}/\overline{S}_{\phi}^{\circ}|,\]
     which is to say the number:
     
   \begin{align} \label{coefficint change 1}
   |\overline{S}_{\phi,s}^{+}/\overline{S}_{\phi,s}|^{-1}|\mathcal{S}_{\phi}|.
   \end{align}

\bigskip      
      The second step is to use the bijection (7.1) and write the right hand side of (6.12) as an integral over $\widehat{G} \bbslash Y_{\disc}(G,\zeta)$. Recall firstly that:
       \[\mathcal{E}_{\elll}(G)= \widehat{G} \bbslash E_{\elll}(G).\]
       
       Now, the stabilizer of a given $G^{\prime} \in E_{\elll}(G)$ in $\widehat{G}$ is the group $\Aut_{G}(G^{\prime})$. This means that the integral over $\widehat{G} \bbslash Y_{\disc}(G,\zeta)$ could be written as a double sum-integral:
\[
 \sum_{G^{\prime} \in \mathcal{E}_{\elll}(G)} \int_{\Aut_G(G^{\prime}) \bbslash F_{s-\disc}(G^{\prime},\zeta)}
\]       

Now the integral over $\phi^{\prime} \in \Aut_G(G^{\prime}) \bbslash F_{s-\disc}(G^{\prime},\zeta)$, could be replaced by the integral over $G^{\prime} \bbslash F_{s-\disc}(G^{\prime},\zeta)= \Phi_{s-\disc}(G^{\prime},\zeta)$, so long as we multiply the integrand by the number:

    \begin{align} \label{coefficint change 2}
   |\Out_{G}(G^{\prime})|^{-1}|\Out_{G}(G^{\prime},\phi^{\prime})|
   \end{align}
where $\Out_{G}(G^{\prime},\phi^{\prime})$ is the stabilizer of $\phi^{\prime}$ in $\Out_{G}(G^{\prime})$.
   
   \bigskip
   We have now established that the double sum-integral on the right hand side of (6.12), can be replaced by the double sum-integral:
   \[
   \sum_{G^{\prime} \in \mathcal{E}_{\elll}(G)} \int_{\Phi_{s-\disc}(G^{\prime},\zeta)}
   \]
   provided that the summand is multiplied by the product of the two numbers (\ref{coefficint change 1}) and (\ref{coefficint change 2}).
     Finally, the coefficient occurring in the summand on the right hand side of (6.12) is:
        \begin{align} \label{coefficint change 3}
   |\mathcal{S}_{\phi}|^{-1}|\pi_{0}(\overline{S}_{\phi,s})|^{-1}\sigma(\overline{S}^{\circ}_{\phi,s}).
   \end{align}
   
   Thus we have:
   \[
   I^G_{\disc}(f) = \sum_{G^{\prime} \in \mathcal{E}_{\elll}(G)} \int_{\Phi_{s-\disc}(G^{\prime},\zeta)} (7.2) \cdot (7.3) \cdot (7.4) \cdot  f_1^{\prime}(\phi,s) \overline{f_2^{\prime}(\phi,s)}  d \phi^{\prime} \cdot
   \]
   
   \bigskip
   
We thus need to express the product of (7.2), (7.3), and (7.4), in terms of the pair $(G^{\prime},\phi^{\prime})$.
\bigskip
   
 The product of (\ref{coefficint change 1}), (\ref{coefficint change 2}) and (\ref{coefficint change 3}) is equal to the product of:   \[|\Out_{G}(G^{\prime})|^{-1}\] 
   and

   \begin{align} \label{coefficint change 4}
   |\Out_{G}(G^{\prime},\phi^{\prime})||\overline{S}_{\phi,s}^{+}/\overline{S}_{\phi,s}|^{-1}|\pi_{0}(\overline{S}_{\phi,s})|^{-1}\sigma(\overline{S}_{\phi,s}^{\circ}).
   \end{align}

\bigskip
Now under the correspondence $(G^{\prime},\phi^{\prime}) \longleftrightarrow (\phi,s)$, we have:
 \begin{align*}
 |\Out_{G}(G^{\prime},\phi^{\prime})|&=|S_{\phi,s}^{+}/S_{\phi,s}^{+}\cap\widehat{G^{\prime}}Z(\widehat{G})^{\Gamma_{\R}}| \\
                                     &=|\overline{S}_{\phi,s}^{+}/\overline{S}_{\phi,s}^{+}\cap\overline{\widehat{G^{\prime}}}|
\end{align*}
where $\overline{\widehat{G^{\prime}}}$ denote the quotient
\[\widehat{G^{\prime}}Z(\widehat{G})^{\Gamma_{\R}}/Z(\widehat{G})^{\Gamma_{\R}}\cong \widehat{G^{\prime}}/\widehat{G^{\prime}} \cap Z(\widehat{G})^{\Gamma_{\R}}.\]

\bigskip

Also \[\sigma(\overline{S}_{\phi^{\prime}}^{\circ})=\sigma((S_{\phi^{\prime}}/Z(\widehat{G^{\prime}})^{\Gamma_{\R}})^{\circ})=\sigma(\overline{S}_{\phi,s}^{\circ}/\overline{S}^{\circ}_{\phi,s}\cap\overline{Z}(\widehat{G^{\prime}})^{\Gamma_{\R}})\]
where
\[
\overline{Z}(\widehat{G^{\prime}})^{\Gamma_{\R}} =Z(\widehat{G^{\prime}})^{\Gamma_{\R}} / Z(\widehat{G})^{\Gamma_{\R}}.
\]

\bigskip

 We thus have the identity
 \begin{align} \label{coefficint change 5}
   \sigma(\overline{S}^{\circ}_{\phi^{\prime}})=\sigma(\overline{S}^{\circ}_{\phi,s})|\overline{S}_{\phi,s}^{\circ}\cap\overline{Z}(\widehat{G^{\prime}})^{\Gamma_{\R}}|
\end{align}
by Theorem \ref{coefficient endos theorem}.
 \bigskip
 
 The term
  \[|\pi_{0}(\overline{S}_{\phi,s})|^{-1}|\overline{S}_{\phi,s}^{\circ}\cap\overline{Z}(\widehat{G^{\prime}})^{\Gamma_{\R}}|^{-1}\]
   equals
   \begin{align} \label{coefficint change 6}
   |\overline{S}_{\phi,s}/\overline{S}_{\phi,s}^{\circ}\overline{Z}(\widehat{G^{\prime}})^{\Gamma_{\R}}|^{-1}|\overline{Z}(\widehat{G^{\prime}})^{\Gamma_{\R}}|^{-1}
\end{align}
as can be seen by using \[\overline{S}^{\circ}_{\phi,s}/\overline{S}_{\phi,s}^{\circ}\cap\overline{Z}(\widehat{G^{\prime}})^{\Gamma_{\R}}\cong \overline{S}_{\phi,s}^{\circ}\overline{Z}(\widehat{G^{\prime}})^{\Gamma_{\R}}/\overline{Z}(\widehat{G^{\prime}})^{\Gamma_{\R}}.\]
\bigskip

Finally we can write:
 \begin{align} \label{coefficint change 7}
   |\mathcal{S}_{\phi^{\prime}}|&=|\pi_{0}(\overline{S}_{\phi^{\prime}})|\\
                               &=|\overline{S}_{\phi,s}^{+}\cap\overline{\widehat{G^{\prime}}}/(\overline{S}_{\phi,s}^{+})^{\circ}\overline{Z}(\widehat{G^{\prime}})^{\Gamma_{\R}}| \nonumber \\
                               &=|\overline{S}_{\phi,s}^{+}\cap\overline{\widehat{G^{\prime}}}/ \overline{S}^{\circ}_{\phi,s} \overline{Z}(\widehat{G^{\prime}})^{\Gamma_{\R}}| \nonumber
\end{align}
 where the last equality follows on noting that
\begin{align} \label{coefficint change 8}
  (\overline{S}^{+}_{\phi,s})^{\circ}=\overline{S}_{\phi,s}^{\circ}.
\end{align}

\bigskip
Thus (7.5) is equal to:

\[|\overline{Z}(\widehat{G^{\prime}})^{\Gamma}|^{-1}|\mathcal{S}_{\phi^{\prime}}|^{-1}\sigma(\overline{S}_{\phi^{\prime}}^{\circ}).\]

\bigskip
Finally, under the correspondence $(G^{\prime},\phi^{\prime}) \longleftrightarrow (\phi,s)$, we have 
\[
f_1^{\prime}(\phi,s) \overline{f_2^{\prime}(\phi,s)} = f_1^{G^{\prime}}(\phi^{\prime}) \overline{ f_2^{G^{\prime}}(\phi^{\prime})}.
\]

  \bigskip
  We thus obtain the following main theorem:

 \begin{theorem}
 If $f=f_{1}\times \bar{f}_{2}$, $f_{i}\in \mathcal{H}(G(\R),\zeta),i=1,2$. Then we have
 \begin{equation}
 I^G_{\disc}(f)=\sum_{G^{\prime}\in\mathcal{E}_{\elll}(G)}\iota(G,G^{\prime})\cdot \int_{\Phi_{s-\disc}(G^{\prime},\zeta)}|\mathcal{S}_{\phi^{\prime}}|^{-1}\sigma(\overline{S}_{\phi^{\prime}}^{\circ})f_{1}^{G^{\prime}}(\phi^{\prime})\overline{f_{2}^{G^{\prime}}(\phi^{\prime})}d\phi^{\prime} \end{equation}
where
\[\iota(G,G^{\prime})=|\Out_{G}(G^{\prime})|^{-1}|\overline{Z}(\widehat{G^{\prime}})^{\Gamma_{\R}}|^{-1}.\]
 \end{theorem}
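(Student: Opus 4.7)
The strategy is to take formula (6.12) of Theorem 6.4 and repackage its right-hand side in terms of endoscopic data, by transporting the sum-integral along the measure-preserving bijection (7.1) between $\widehat{G} \bbslash X_{\disc}(G,\zeta)$ and $\widehat{G} \bbslash Y_{\disc}(G,\zeta)$ that was set up above. The argument decomposes naturally into three stages: a change of variables on the $X$-side, a change of variables on the $Y$-side, and a group-theoretic simplification of the resulting coefficient.

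First, I would rewrite the sum-integral over $\Phi_{\disc}(G,\zeta) = \widehat{G}\bbslash F_{\disc}(G,\zeta)$ and over $s \in \mathcal{E}_{\phi,\elll}= \overline{S}_\phi^\circ \bbslash \overline{S}_{\phi,\elll}$ as an integral over $\widehat{G} \bbslash X_{\disc}(G,\zeta)$. The $\widehat{G}$-stabilizer of $\phi$ is $S_\phi$; the $\overline{S}_\phi$-stabilizer of $s$ is $\overline{S}_{\phi,s}^+$; and the $\overline{S}_\phi^\circ$-stabilizer of $s$ is $\overline{S}_{\phi,s}$. Accounting for orbit sizes then introduces the correction factor (7.2), namely $|\overline{S}_{\phi,s}^+ / \overline{S}_{\phi,s}|^{-1} \, |\mathcal{S}_\phi|$, since the sum-measure on $\mathcal{E}_{\phi,\elll}$ and the $\widehat{G}$-orbit measure differ by exactly this ratio.

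Next, I would transport the integral to $\widehat{G} \bbslash Y_{\disc}(G,\zeta)$ along (7.1) and decompose $\widehat{G} \bbslash Y_{\disc}(G,\zeta) = \coprod_{G' \in \mathcal{E}_{\elll}(G)} \bigl(\Aut_G(G') \bbslash F_{s-\disc}(G',\zeta)\bigr)$ using the fact that $\mathrm{stab}_{\widehat{G}}(G') = \Aut_G(G')$. Passing from $\Aut_G(G')$-orbits to $\widehat{G'}$-orbits, i.e.\ to $\Phi_{s-\disc}(G',\zeta)$, leaves a residual $\Out_G(G')$-action on fibers, producing the factor (7.3) equal to $|\Out_G(G')|^{-1} \, |\Out_G(G',\phi')|$. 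Combined with $f_i'(\phi,s) = f_i^{G'}(\phi')$ under the correspondence, this yields
\[
I^G_{\disc}(f) = \sum_{G' \in \mathcal{E}_{\elll}(G)} \int_{\Phi_{s-\disc}(G',\zeta)} \bigl[(7.2) \cdot (7.3) \cdot (7.4)\bigr] \, f_1^{G'}(\phi') \overline{f_2^{G'}(\phi')} \, d\phi',
\]
with (7.4) denoting the original coefficient $|\mathcal{S}_\phi|^{-1} |\pi_0(\overline{S}_{\phi,s})|^{-1} \sigma(\overline{S}_{\phi,s}^\circ)$.

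The main obstacle, and the heart of the proof, is the algebraic bookkeeping needed to collapse the triple product $(7.2)\cdot(7.3)\cdot(7.4)$ into the clean expression $|\Out_G(G')|^{-1} |\overline{Z}(\widehat{G'})^{\Gamma_\R}|^{-1} |\mathcal{S}_{\phi'}|^{-1} \sigma(\overline{S}_{\phi'}^\circ)$. For this I would establish three translation identities between the $G$-side centralizers and the $G'$-side centralizers: (i) $|\Out_G(G',\phi')| = |\overline{S}_{\phi,s}^+ / \overline{S}_{\phi,s}^+ \cap \overline{\widehat{G'}}|$, where $\overline{\widehat{G'}} = \widehat{G'} Z(\widehat{G})^{\Gamma_\R}/Z(\widehat{G})^{\Gamma_\R}$; (ii) the centrality relation $\sigma(\overline{S}_{\phi'}^\circ) = \sigma(\overline{S}_{\phi,s}^\circ) \cdot |\overline{S}_{\phi,s}^\circ \cap \overline{Z}(\widehat{G'})^{\Gamma_\R}|$, obtained by applying (4.3) of Theorem 4.2 to $\overline{S}_{\phi,s}^\circ$ with central subgroup $\overline{S}_{\phi,s}^\circ \cap \overline{Z}(\widehat{G'})^{\Gamma_\R}$; and (iii) $|\mathcal{S}_{\phi'}| = |(\overline{S}_{\phi,s}^+ \cap \overline{\widehat{G'}})/ \overline{S}_{\phi,s}^\circ \overline{Z}(\widehat{G'})^{\Gamma_\R}|$, using that $(\overline{S}_{\phi,s}^+)^\circ = \overline{S}_{\phi,s}^\circ$. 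Threading these identities through a short isomorphism-telescope $\overline{S}_{\phi,s}^\circ / \overline{S}_{\phi,s}^\circ \cap \overline{Z}(\widehat{G'})^{\Gamma_\R} \cong \overline{S}_{\phi,s}^\circ \overline{Z}(\widehat{G'})^{\Gamma_\R}/\overline{Z}(\widehat{G'})^{\Gamma_\R}$, all the intermediate groups cancel in pairs and a single factor $|\overline{Z}(\widehat{G'})^{\Gamma_\R}|^{-1}$ survives, yielding the coefficient $\iota(G,G')$ as claimed. Assembling everything completes the proof.
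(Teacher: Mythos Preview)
Your proposal is correct and follows essentially the same approach as the paper: rewrite (6.12) as an integral over $\widehat{G}\bbslash X_{\disc}(G,\zeta)$ picking up the factor (7.2), transport along the bijection (7.1) to $\widehat{G}\bbslash Y_{\disc}(G,\zeta)$ and fiber over $\mathcal{E}_{\elll}(G)$ picking up (7.3), and then simplify the product $(7.2)\cdot(7.3)\cdot(7.4)$ via exactly the three identities you list (your (i)--(iii) are the paper's unnumbered $\Out$-identity, (7.6), and (7.8), with the telescope being (7.7)). The only cosmetic difference is that the paper records the intermediate identity $|\pi_0(\overline{S}_{\phi,s})|^{-1}|\overline{S}_{\phi,s}^\circ\cap\overline{Z}(\widehat{G'})^{\Gamma_\R}|^{-1}=|\overline{S}_{\phi,s}/\overline{S}_{\phi,s}^\circ\overline{Z}(\widehat{G'})^{\Gamma_\R}|^{-1}|\overline{Z}(\widehat{G'})^{\Gamma_\R}|^{-1}$ explicitly as a separate step before invoking (iii), whereas you absorb this into the final cancellation.
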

 
\bigskip
Now we define, for any quasi-split $K$-group $G$ over $\R$, the following stable distribution for $G$:
\[
S^G_{\disc}(f) :=  \int_{\Phi_{s-\disc}(G,\zeta)}|\mathcal{S}_{\phi}|^{-1}\sigma(\overline{S}_{\phi}^{\circ})f_{1}^{G}(\phi)\overline{f_{2}^{G}(\phi)}d\phi . 
\]
where as before $f=f_1 \times \bar{f}_2$, $f_1,f_2 \in \mathcal{H}(G(\R),\zeta)$. In particular one has the stable distribution $S_{\disc}^{G^{\prime}}$ for $G^{\prime}$, with $G^{\prime}$ being an endoscopic datum of $G$. 

\bigskip
Still with $f= f_1 \times \bar{f}_2$, $f_1, f_2 \in \mathcal{H}(G(\R),\zeta)$, and $G^{\prime}$ an endoscopic datum of $G$, one has functions $f_1^{\prime},f_2^{\prime} \in \mathcal{H}(G^{\prime},\zeta)$ such that the stable orbital integrals of $f_i^{\prime}$ is equal to the Langlands-Shelstad transfer $f_i^{G^{\prime}}$ ($i=1,2$) \cite{S1}. Put $f^G := f_1^{G^{\prime}} \times \bar{f}_2^{G^{\prime}}$ and $f^{\prime} := f_1^{\prime} \times \bar{f}^{\prime}_2$. Then since $S_{\disc}^{G^{\prime}}$ is stable for $G^{\prime}$, one has that the value $S_{\disc}^{G^{\prime}}(f^{\prime})$ depends only on the stable orbital integral of $f^{\prime}$, i.e. only on the Langlands-Shelstad transfer $f^{G^{\prime}}$. We thus denote the value $S_{\disc}^{G^{\prime}}(f^{\prime})$ as $\widehat{S}_{\disc}^{G^{\prime}}(f^{G^{\prime}})$.

\bigskip
With these notations we have:
\[
\widehat{S}^{G^{\prime}}_{\disc}(f^{G^{\prime}}) =  \int_{\Phi_{s-\disc}(G^{\prime},\zeta)}|\mathcal{S}_{\phi^{\prime}}|^{-1}\sigma(\overline{S}_{\phi^{\prime}}^{\circ})f_{1}^{G^{\prime}}(\phi^{\prime})\overline{f_{2}^{G^{\prime}}(\phi^{\prime})}d\phi^{\prime} \]
hence in the context of Theorem 7.1, we can write equation (7.10) in the following form:
\begin{equation}
I^G_{\disc}(f)=\sum_{G^{\prime}\in\mathcal{E}_{\elll}(G)}\iota(G,G^{\prime})\widehat{S}^{G^{\prime}}_{\disc}(f^{G^{\prime}}).
\end{equation}

\section{Spectral side of stable local trace formula}

With Theorem 7.1 in hand, we can now obtain the explicit formula for the spectral side of the stable local trace formula. As before $G$ is any $K$-group over $\R$ that is quasi-split, with central data $(Z,\zeta)$. 

\bigskip

In \cite{A6}, one has following stable distribution $S^G$ for $G$, which is the stable version of the geometric side of the local trace formula $I^G$, and is defined as follows. For $f=f_{1}\times\bar{f}_{2}$, with $f_1,f_2 \in \mathcal{H}(G(\R),\zeta)$:
\begin{equation}
 S^{G}(f)=\sum_{M\in\mathcal{L}}|W^{M}_{0}||W^{G}_{0}|^{-1}(-1)^{\dimm(A_{M}/ A_{G})}\int_{\Delta_{G-\reg,\elll}(M,V,\zeta)}n(\delta)^{-1}S^G_{M}(\delta,f)d\delta
\end{equation}
{\it c.f.} equation (10.11) of \cite{A6}. Here $n(\delta)$ is the order of the group $\mathcal{K}_{\delta}$ as defined on p. 509 of \cite{A3}, and $\Delta_{G-\reg,\elll}(M,V,\zeta)$ is the stable version of $\Gamma_{G-\reg,\elll}(M,V,\zeta)$, similarly $S^G_M(\delta,f)$ is the stable version of $I^G_M(\gamma,f)$.

\bigskip
In particular one has the stable distribution $S^{G^{\prime}}$ for $G^{\prime}$, with $G^{\prime}$ being an endoscopic datum of $G$. It is then shown in \cite{A6} that, the geometric side of the local trace formula $I^G(f)$ for $G$ (where $f = f_1 \times \bar{f}_2$, $f_1,f_2 \in \mathcal{H}(G(\R),\zeta)$ as above) satisfies the following endoscopic decomposition:
\begin{equation}
I^G(f)=\sum_{G^{\prime}\in\mathcal{E}_{\elll}(G)}\iota(G,G^{\prime})\widehat{S}^{G^{\prime}}(f^{G^{\prime}})
\end{equation}
where as before $f^{G^{\prime}}$ is the Langlands-Shelstad transfer of $f$ to $G^{\prime}$, {\it c.f.} equation (10.16) of \cite{A6}. Here the meaning of $\widehat{S}^{G^{\prime}}(f^{G^{\prime}})$ is similar to that of concerning $\widehat{S}^{G^{\prime}}_{\disc}(f^{G^{\prime}})$, {\it c.f.} the discussion near the end of Section 7.

\bigskip
We remark that, for the archimedean case of the local trace formula, the geometric transfer identities that are needed in \cite{A6} to establish the endoscopic decomposition (8.1)-(8.2), were established directly in \cite{A8}, and so it is independent of global arguments (in the non-archimedean case, the endoscopic decomposition of the local trace formula was established in \cite{A6} using global arguments, and so the fundamental lemma is needed in the non-archimedean case). We also remark that, when one of the components of $f$ is cuspidal, then the arguments for the endoscopic decomposition (8.1)-(8.2) were already carried out in \cite{A4}, Section 9 - 10.

 \begin{theorem}
We have the stable local trace formula:
   \[S^{G}_{\disc}(f)   = S^{G}(f) \]
 where as before
 \[S^{G}(f)=\sum_{M\in\mathcal{L}}|W^{M}_{0}||W^{G}_{0}|^{-1}(-1)^{\dimm(A_{M}/ A_{G})}\int_{\Delta_{G-\reg,\elll}(M,V,\zeta)}n(\delta)^{-1} S^G_{M}(\delta,f)d\delta,\]
 and
 \[S^{G}_{\disc}(f)=\int_{\Phi_{s-\disc}(G,\zeta)}|\mathcal{S}_{\phi}|^{-1}\sigma(\overline{S}_{\phi}^{\circ})f_{1}^{G}(\phi)\overline{f_{2}^{G}(\phi)}d\phi.\]

 \begin{proof}
 By induction on $\dimm(G_{\der})$. Put $\mathcal{E}_{\elll}^{\circ}(G) = \mathcal{E}_{\elll}(G) \backslash \{G\}$. Applying equation (7.11) and equation (8.2), we have
 \[ S^{G}(f)= \widehat{S}^G(f^{G})= I^G(f)-\sum_{G^{\prime}\in\mathcal{E}_{\elll}^{\circ}(G)}\iota(G,G^{\prime})\widehat{S}^{G^{\prime}}(f^{G^{\prime}})\]
 and
 \[S_{\disc}^{G}(f)= \widehat{S}^G_{\disc}(f^G)= I^G_{\disc}(f)-\sum_{G^{\prime}\in\mathcal{E}_{\elll}^{\circ}(G)}\iota(G,G^{\prime})\widehat{S}_{\disc}^{G^{\prime}}(f^{G^{\prime}}).\]

Now for $G^{\prime}\in\mathcal{E}_{\elll}^{\circ}(G)$, one has $\dimm(G^{\prime}_{\der})<\dimm(G_{\der})$, so by the induction hypothesis, we have
  \[\widehat{S}^{G^{\prime}}_{\disc}(f^{G^{\prime}})=\widehat{S}^{G^{\prime}}(f^{G^{\prime}}).\]
  Combining with the equality $I^G_{\disc}(f)=I^G(f)$, we have obtained the theorem.
 \end{proof}
 \end{theorem}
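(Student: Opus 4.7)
The plan is to proceed by strong induction on $\dimm(G_{\der})$, exploiting the two endoscopic decompositions already in hand. On the spectral side, Theorem 7.1 gives
\[I^G_{\disc}(f)=\sum_{G^{\prime}\in\mathcal{E}_{\elll}(G)}\iota(G,G^{\prime})\widehat{S}^{G^{\prime}}_{\disc}(f^{G^{\prime}}),\]
and on the geometric side, equation (8.2) from \cite{A6} gives the parallel identity
\[I^G(f)=\sum_{G^{\prime}\in\mathcal{E}_{\elll}(G)}\iota(G,G^{\prime})\widehat{S}^{G^{\prime}}(f^{G^{\prime}}).\]
The strategy is to isolate the $G^{\prime}=G$ contribution in each decomposition and then apply the induction hypothesis to identify the remaining endoscopic terms with one another.

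Concretely, put $\mathcal{E}_{\elll}^{\circ}(G)=\mathcal{E}_{\elll}(G)\setminus\{G\}$. Noting that $\iota(G,G)=1$ (since $\Out_G(G)$ and $\overline{Z}(\widehat{G})^{\Gamma_{\R}}$ are both trivial when $G^{\prime}=G$), and that the Langlands--Shelstad transfer $f^{G}$ is by construction the stable orbital integral of $f$, we have $\widehat{S}^G_{\disc}(f^G)=S^G_{\disc}(f)$ and $\widehat{S}^G(f^G)=S^G(f)$. Solving each decomposition for the $G^{\prime}=G$ term thus yields
\[S^G_{\disc}(f)=I^G_{\disc}(f)-\sum_{G^{\prime}\in\mathcal{E}_{\elll}^{\circ}(G)}\iota(G,G^{\prime})\widehat{S}^{G^{\prime}}_{\disc}(f^{G^{\prime}}),\]
\[S^G(f)=I^G(f)-\sum_{G^{\prime}\in\mathcal{E}_{\elll}^{\circ}(G)}\iota(G,G^{\prime})\widehat{S}^{G^{\prime}}(f^{G^{\prime}}).\]
For each $G^{\prime}\in\mathcal{E}_{\elll}^{\circ}(G)$ one has $\dimm(G^{\prime}_{\der})<\dimm(G_{\der})$, so the induction hypothesis applied to the quasi-split $K$-group $G^{\prime}$ gives $S^{G^{\prime}}_{\disc}=S^{G^{\prime}}$ as stable distributions, and consequently $\widehat{S}^{G^{\prime}}_{\disc}(f^{G^{\prime}})=\widehat{S}^{G^{\prime}}(f^{G^{\prime}})$ for every test function.

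Combining the two displayed equations with the Arthur local trace formula identity $I^G_{\disc}(f)=I^G(f)$ (Theorem 4.2 of \cite{A2} and Proposition 6.1 of \cite{A6}) gives $S^G_{\disc}(f)=S^G(f)$, which completes the induction. The base case corresponds to the situation where $\mathcal{E}_{\elll}^{\circ}(G)$ is empty (for instance when $G$ is a torus), in which case the identity reduces at once to the invariant local trace formula. There is no substantive obstacle in this argument: all the real work has been absorbed into Theorem 7.1 (the stabilization of the spectral side) and into the geometric endoscopic decomposition of \cite{A6,A8}, and the induction here is purely formal, modulo keeping careful track of the normalizations so that the $G^{\prime}=G$ term is correctly identified on both sides.
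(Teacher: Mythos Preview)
Your proof is correct and follows essentially the same approach as the paper's own proof: induction on $\dim(G_{\der})$, isolating the $G'=G$ term from the two endoscopic decompositions (Theorem 7.1 and equation (8.2)), and then identifying the remaining terms via the induction hypothesis together with $I^G_{\disc}(f)=I^G(f)$. Your version is in fact slightly more explicit, spelling out why $\iota(G,G)=1$ and why $\widehat{S}^G(f^G)=S^G(f)$, and mentioning the base case.
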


\bigskip

\begin{remark}
\end{remark}

\noindent (1) The method and results of this paper could be extended to the case where $G$ is a connected reductive group over a $p$-adic field, as long as the local Langlands correspondence is known for $G$, and such that the $R$-groups $R_{\phi}$ and the component groups $\mathcal{S}_{\phi}$ attached to the Langlands parameters of $G$ are all abelian. For instance the case of classical groups by the works \cite{A9,M} (with a slight complication in the even orthogonal case). We leave it to the reader to formulate the corresponding results. 

\bigskip

\noindent (2) 
For the geometric side of the stable local trace formula, the distributions $S^G_{M}(\delta,f)$ are defined inductively in terms of the invariant distributions $I^G_{M}(\gamma,f)$, which in turn is defined inductively by weighted orbital integrals, {\it c.f.} \cite{A4}. It is thus a highly non-trivial matter to obtain explicit formulas for the distributions $S^G_{M}(\delta,f)$, when $M \neq G$. In this regard, we refer the reader to \cite{P}, where the stable local trace formula is used to obtain explicit formulas for $S^G_{M}(\delta,f)$, in the case where $f= f_1 \times \bar{f}_2$, where $f_2$ is the stable pseudo-coefficient of a square integrable parameter of $G$.

\end{document}